\newtheorem{theorem}{Theorem}[section]
\newtheorem{lemma}[theorem]{Lemma}
\newtheorem{proposition}[theorem]{Proposition}
\newtheorem{definition}[theorem]{Definition}
\newtheorem{remark}[theorem]{Remark}
\newcommand{\cF}{{\cal F}}
\newcommand{\cP}{{\cal P}}
\def\cF{{\mathcal {F}}}
\def\cB{{\mathcal B}}
\def\cP{{\mathcal P}}
\def\R{{ \mathbb{R}}}
\newcommand{\RE} {{\rm I \kern-2.8pt R} }
\begin{document}

\begin{frontmatter}

\title{ $\R \times BL^* $ Valued Consumer Resource Model}

\author{John Cleveland } 
\address{1200 US Highway 14 West, Richland Center WI 53581 }




\begin{abstract}
The ideas and techniques developed in \cite{CLEVACK, JC2} are applied to the basic pure selection (no mutation) parametric heterogeneous consumer resource model developed in \cite{SmithThieme} to derive a fully nonlinear resource dependent selection mutation $\R \times BL^*$ valued model. Where $BL^*$ is the dual of the Lipschitz maps, a Banach Space. By the appropriate choice of initial condition, and mutation kernel parameter this model unifies both discrete and continuous, pure selection and mutation selection, measure valued and density valued basic consumer resource models. In this paper well-posedness and uniform eventual boundedness under biologically sound assumptions is presented.
\end{abstract}

\begin{keyword}
 Evolutionary game models, chemostat, selection-mutation, Lipschitz maps,  consumer resource,
\MSC[2010] 91A22 \sep  34G20 \sep  37C25 \sep
92D25.
\end{keyword}

\end{frontmatter}


\section{Introduction}
In this paper we apply the techniques developed in \cite{JC2} to a version of the basic consumer resource model developed in \cite{ SmithThieme}.
 In \cite{SmithThieme} we are given the system \begin{equation}\label{base} \begin{split} S'  & = \Lambda -DS - \sum_{j=1}^n f_j(S)I_j, \\
I'_j & = f_j(S)I_j -D_jI_j, \qquad j=1, ..., n,
 \end{split} \end{equation}
This can be interpreted as a chemostat model with $n$ species of consumers competing for the limited substrate $S$ or as an epidemic model for the spread of an infectious pathogen that comes in $n$ different strains and converts susceptible hosts  $S$ into hosts $I_j$ infected with strain $j$. In general this is a competition model where there are $n$ consumers, $I_j,$ competing for a resource $S$ which once consumed provides for the increase in the consumer, $I_j.$

 In this paper we extend the above model and use game theory to model \eqref{base} as an evolutionary game. Once the consumer resource model is formulated in the language of evolutionary game theory we use semiflow theory on metric spaces to mathematically model the evolutionary game as a semiflow on a suitable metric space.

As a brief recap, before we begin with the formal definitions of evolutionary game and semiflow for this paper we briefly outline the need for this abstract machinery. We take the following from \cite{JC1}.  We consider the following EG (evolutionary game) model of generalized logistic growth with pure selection (i.e., strategies
replicate themselves exactly and no mutation occurs)  which was developed and
analyzed in \cite{AMFH}:
\begin{equation}
 \frac{d}{dt} x(t,q) = x(t,q) (
q_1 -q_2 X(t)), \label{logiseq}\end{equation}
 where $X(t) = \int_Q x(t,q) dq$ is the total population, $Q \subset \text{int}(\mathbb{R}_+^2)$ is compact
 and the state space is the set of continuous real valued functions
 $C(Q)$. Each $ q=(q_1, q_2) \in Q$ is a two tuple where $q_1$ is an
 intrinsic replication rate and $q_2$ is an intrinsic mortality
 rate. The solution to this model converges to a Dirac
 mass centered at the fittest $q$-class. This is the class with the highest birth to death ratio
 $\frac{q_1}{q_2}$,
 and this convergence is in a topology called $weak^* $
  (point wise convergence of functions) \cite{AMFH}. However, this Dirac limit is not in the
  state space as it is not a continuous function. It is a measure.  Thus, under this formulation one cannot treat this Dirac mass
  as an equilibrium (a constant) solution and hence the study of linear stability analysis is not possible.

  Other examples
  for models developed on classical state spaces such as $L^1(X,\mu)$ that demonstrate the emergence of Dirac measures in the asymptotic limit from smooth initial densities are given in \cite{AMFH,AFT,calsina,CALCAD,GVA,P,GR1,GR2}.
  In particular, how the measures arise naturally in a biological and adaptive dynamics environment is illustrated quite well in \cite[chpt.2]{P}.
   These examples show that the chosen state space for formulating such selection-mutation models must \textbf{contain} densities and Dirac masses and the topology used must \textbf{contain the ability to demonstrate convergence} of densities to Dirac masses.
This process is illustrated in the precursors to this work in \cite{CLEVACK,CLEVACKTHI}.

  In this paper an Evolutionary Game (EG) is defined as a game in which the strategy profiles evolve over time under evolutionary forces (EF) i.e. birth, mortality, mutation, selection (replication), recombination, drift etc... They can also be termed Universal Darwinian Games. Here each consumer is modeled as a strategy and the set of strategies is modeled as a compact metric space, $Q.$ The quantity of the limiting substrate is modeled as a real variable, $S$. The state of the game at a particular time $t$ is modeled as an ordered pair, $ [ S(t),\mu(t)]$  subject to constraints equations. $S$ models the resource and $ \mu \in BL^*= BL(Q)^*$, the dual of bounded  Lipschitz maps on $Q, $  models the distribution of the population of consumers among the strategies.

   However, we also add a nonlinear mutation parameter $ \gamma $ , so that there can also be mutation among the consumers. This is a particularly useful model when the consumers are strains of a pathogen e.g. flu virus where mutation is a fundamental component of its evolution.  This evolutionary game is modeled as a semiflow on a suitable metric space subject to constraints.
\begin{definition}
If $X$ is a metric space, and $ J \subset \R_+$ is an interval that contains zero then a map $$ \Phi: J \times X \rightarrow X $$ is called a local (global autonomous) semiflow if:
\begin{itemize}
\item[(1)] $\Phi(0;x) =x.$
\item[(2)] $\Phi(t+s; x) = \Phi(t; \Phi(s;x))$,  $\forall t, s \in J , ~x \in  X.$
\end{itemize}
\end{definition}

 If $f : X \rightarrow X $ is a locally Lipschitz vectorfield and $ x(t)$ is the unique solution to  $x'(t) = f(x)$  and $ x(0) =x_0$. Then we obtain a global autonomous semiflow $\Phi(t; x_0) = x(t). $ This semiflow is always continuous \cite[ Chpt.1, pg.19]{Thi03}.

 In particular, in the present paper we let $ [ X, D_X ] $ be our metric space where  $$ X = \mathbb{R} \times BL^* \times L(Q;\mathcal{P}^*). $$ Here $Q$ is a compact metric space and $ BL=BL(Q)$ are the bounded Lipschitz maps on $Q.$ $BL^*$ is the norm dual of $BL$ and  $ L(Q;\mathcal{P}^*)$ are the Lipschitz maps into $\mathcal{P}^*$. Elements of $\mathcal{P}^*$ are to be thought of as generalizations of probability measures.  They are elements of $BL^*$ of norm 1. $ \gamma \in L(Q;\mathcal{P}^*)$ is the parameter of our system and is to be thought of as a family of ``probability distributions" indexed by $ Q$. It is the \textbf{mutation kernel}. The metric $ D_X$ satisfies
  $$ D_X( (s_1,u_1, \gamma_1),(s_2, u_2, \gamma_2)) = |s_1 + s_2| + \|u_1 -u_2 \|_{BL}^* + \|\gamma_1 -\gamma_2\|_{\infty}^*. $$


 ( See subsection \ref{technical} for the definitions of  $ \| \cdot \|_{BL}^*$  and $ \|\cdot \|_{\infty}^*.$ )

  In order for a semiflow to model our Evolutionary Game it must satisfy the \textbf{ constraint equations}. In other words our (EG) model is
 an ordered triple $$(Q,\Phi(t;\cdot), \mathcal{F})$$  subject to:

\begin{equation}\label{mconstraint}\frac{d}{dt}\Phi(t;x)[g]= \cF[\Phi(t;x)][g], \text{ for every}
~~g \in BL(Q). \end{equation}
Here $Q$ is the strategy
(compact metric) space, $\Phi(t;x)$
is a semiflow on $X$ and $$\cF : X \rightarrow BL^*$$  is a  vector field (parameter dependent) such that $ \Phi$ and $\cF$
satisfy equation \eqref{mconstraint}. 

 Here is a heurestic understanding of the model in the measure theoretic setting. By this we mean that if $\cal{M}$ denotes the finite signed measures on $Q,$ and $C(Q)$ denotes the continuous functions on $Q$ then we notice that $ \mathcal{M} = C(Q)^* \subset BL^*$ and we give an interpretation of the model in this setting. If $ x = [ s, \mu, \gamma] \in X $, then the real variable $s$ models the amount of resource available, $ \mu (E)$ is  a measure of the quantity of strategies present in the Borel set $ E,$ $ \gamma $ is the mutation kernel. This means that $\gamma(q)(E)$ is the proportion of the $q$-strategy population offspring that are in the Borel set $E$.

   From \cite{SmithThieme} we see that for pure selection the equilibrium point was a dirac mass. The obvious choice for state space was $\mathbb{R}_+ \times \mathcal{M}_+$, under the $weak^*$ topology. Where $\mathcal{M}_+$  denotes the cone of the positive measures. However, $ \mathbb{R}_+ \times \mathcal{M}_+$ is a complete metric space and not a Banach Space.  With slight modifications of the definitions one could use the techniques of either mutational analysis \cite{JPA1,JPA2, Lorenz} or differential equations in metric spaces \cite{Tabor02} or arcflows of arcfields \cite{CC,CB}  to generate a semiflow that satisfies the equivalent of the initial value problem in semiflow theory language.

The method employed here is  that we find a Banach Space,  $\mathbb{R}\times BL^*$ containing $ \mathbb{R}_+\times\mathcal{M}_+ $ as a closed metric subspace. Then we extend the constraint equation on $ \mathbb{R}_+\times\mathcal{M}_+ $ to one on $\mathbb{R}\times BL^*$. The semiflow resulting from the solution of the generalized constraint equation has $ \mathbb{R}_+\times\mathcal{M}_+ $ as a forward invariant subset and hence we generate our semiflow on $ \mathbb{R}_+\times\mathcal{M}_+ $. This is essentially the method employed here. However, using this approach we see that we generate a semiflow on any forward invariant subset of $X$.

So we see from the above that using measures we can capture a lot of the needed elements for developing a semiflow model of the consumer resource evolutionary game. However, using the measures one runs into long and less than efficient arguments. For example, one must use different topologies on the measures to derive key results. One uses total variation on a fixed point space to obtain the model, and then one places another topology, weak star to develop continuity of the model. Some of the arguments and estimates are long and cumbersome. However, using the machinery developed in \cite{JC2}, in particular the multiplicaton $ \bullet $ defined therein, we can use $ \R \times BL^* $ as a statespace and cut down considerably on complexity of arguments and obtain stronger results. Furthermore, the model developed on  $\R \times BL^*$ includes all the other aforementioned cases as special cases. So this model is the most general developed so far and the mathematics is much nicer since we can use a \emph{norm} for estimates as opposed to \emph{families of seminorms}.

This paper is organized as follows. Section 1 comprises a brief description of the paper along with motivation and a brief literature review. In section 2 we develop the constraint equation and in section 3 we give background mathematical definitions and notation needed to follow the later material. In section 4 the consumer resource model is developed. Section 5 is devoted to showing wellposedness and non negativity. Section 6 shows uniform eventual boundedness. Section 7 demonstrates the unifying power of the model and shows that it encompasses the model in \cite{SmithThieme}. Section 8 is a conclusion which includes some future problems.

  \section{Constraint Equation}\label{CE}

 We start with the discrete pure selection heterogeneous parameter density consumer resource model developed in \cite{SmithThieme}. From this model we add resource dependence to all the vital rates $ B, D $ and add a nonlinear mutation term with a mutation kernel $\gamma .$ We then integrate and use Fubini to obtain a measure theoretic model. Hence the discrete model in \cite{SmithThieme}

\begin{equation}\label{x}\begin{split}
S' &=  \Lambda -DS - \sum_{j=1}^{n} B_j(S)I_j, \\
I'_j & = B_j(S)I_j -D_jI_j, \qquad \mbox{ j = 1, ... , n,}
\end{split}
\end{equation}
becomes the model

\begin{equation}\label{xx}\begin{split}
S'(t) &=    \Lambda -DS - \int_Q B(S(t),q)\mu(t)(dq)\\
\mu'(t)(E) & =\int_Q B(S(t),q)\gamma(q)(E)\mu(t)(dq)-\int_E D(S(t),q)\mu(t)(dq).
\end{split}
\end{equation}

These models  can be interpreted as a chemostat model where the different species are  consumer strategies $q$  competing for the limited substrate $S$ or as an epidemic model for the spread of an infectious pathogen that comes in strains ($q$ - strategies) and converts the resource $S$ into hosts in $E$ where $E$ is a Borel subset of $Q$ a compact metric space.

With these interpretations, above $D$ is the dilution or washout rate of the substrate or resource or death rate of the host. $\Lambda$ is the rate at which fresh substrate or resource is entering the system and $\gamma(  q)(E)$ is the proportion of the $ q$ -strategy population offspring adopting strategies that are in $E$, a Borel subset of $Q$.

Starting from \eqref{xx} we apply the  $\bullet$  operation defined in \cite{JC2} (and redefined below in section \ref{PM}) to obtain:

\begin{equation}\begin{split}\label{xxx}
S'(t) &=    \Lambda -DS -  B(S(t), \cdot)\bullet \mu(t)[1]\\
\mu'(t) & =B(S(t), \cdot)\gamma(\cdot)\bullet\mu(t)- D(S(t), \cdot)\bullet \mu(t).
\end{split}
\end{equation}

Here $S$ is a real function of a real variable $t$ and $\mu$ is a $BL^*$ valued function of a real variable $t$.

Suppose  $ [S,\mu ]$ is a solution to \eqref{xxx}. Then define
  $$ \Phi(t;(s,u, \gamma))= [ \mu(t,s,u,\gamma), \gamma] $$  where $$ \frac{d\Phi}{dt}(t;(s, u, \gamma))= [\frac{d\mu}{dt}(t,s,u,\gamma), \gamma]. $$

 Let \begin{equation} \label{eq:F}F(m,\gamma) = F( S,\mu, \gamma) = [F_1( S,\mu, \gamma), F_2( S,\mu, \gamma)] \end{equation}
where

 \begin{equation}\label{yy}\begin{split}
 F_1(m,\gamma ) &= \Lambda -D S +  B(S,\cdot) \bullet \mu[1] \\
  F_2(m, \gamma)&= \gamma( \cdot ) B(S,\cdot) \bullet \mu -  D(S,\cdot)\bullet {\mu}
  .\end{split}
\end{equation}

  If
 $ \cF( \mu, \gamma) = [F(\mu, \gamma), \gamma] $ where $F$ is as in \eqref{eq:F}, then

 \begin{equation*} \begin{split}
 \cF[ \Phi(t;(s,u, \gamma))]  & = \cF [ \mu(t,s,u, \gamma), \gamma] =  [F(\mu(t,s, u, \gamma), \gamma] =[\frac{ d\mu}{dt}(t,s, u,\gamma), \gamma]
   = \frac{d\Phi}{dt}(t; (s, u, \gamma))
 \end{split} \end{equation*}
 or for  $ x =(s, u, \gamma) $
 \begin{equation}\label{CONSTRAINT} \begin{split}    \frac{d\Phi}{dt}(t;x)& = \cF[ \Phi(t; x)] \mbox { ~ and } \\
  \Phi(0;x) = x \end{split} \end{equation}

 \eqref{CONSTRAINT}  is the $BL^*$ valued constraint equation.

\section{Preliminary  Material and the definition of $\bullet$}\label{PM}

We begin modeling with  $([Q,d], \cB(Q),P)$ where $[Q,d]$ is a compact metric space, $\mathcal{B}
(Q)$ are the Borel sets on $[Q,d]$ and $P$ is a probability measure on the Measurable Space $([Q,d], \mathcal{B}(Q))$ representing an initial weighting on the strategies. One can think of $Q$ as a compact subset of $\mathbb{R}^n$ and $P$ as a probability measure (initial weighting) on this set.
$Q$ above is used to model the space of strategies. What we seek as a model of our game is a semiflow subject to the constraint equation \eqref{CONSTRAINT} which will follow easily from a parameter indexed family of solutions to \eqref{xx} above.

\subsection{Birth and Mortality Rates}

 As mentioned in the second paragraph, the evolutionary forces that act on our population are: $ B(S,q)$, $D(S,q)$, $\gamma(q).$
 $B(S,q), ~D(S,q)$ are the per capita uptake and washout rates of the $ q- strategy $ populations respectively.  We assume the following regularity.

\begin{itemize}
\item[(A1)] $ B( \cdot,q)$ is nondecreasing and Lipschitz continuous uniformly for $q \in Q.$ $ \forall q \in Q, B(0,q)=0 $ and $B(S,q)>0$ for $ S>0.$
\item[(A2)]$  D(\cdot,q)$ is nonincreasing and  Lipschitz  continuous uniformly for $ q \in Q.$ Also $ \min_{ q,S} D(S,q)=\varpi > 0 .$ (This means that there is some inherent mortality not density
related)
\end{itemize}
These assumptions are of sufficient generality to capture many nonlinearities of classical population dynamics including Ricker,
Beverton-Holt, and Logistic (e.g.,  see \cite{AFT}).

 \subsection{Technical Preliminaries for Bounded Lipschitz Formulation } \label{technical}


If $ [Y, \|\cdot\|_Y] $ is  a Banach Algebra, $ C(Q;Y)$ denotes the continuous $Y$- valued maps under the uniform norm,
 $$  \|f\|_{\infty}   = \sup_{q \in Q} \|f \|_{Y} . $$

Two important subspaces are  $$ L(Q;Y)[M] \subset L(Q;Y) \subset C(Q;Y).$$
Where $L(Q;Y)$ is the dense subspace of all $Y$ -valued   Lipschitz maps and $L(Q;Y)[M]$ is the locally compact subspace of Lipschitz maps with Lipschitz bound smaller than or equal to $M$.

 If no range space is specified then $ C(Q) =C(Q;\mathbb{R})$, denotes the Banach space of continuous real valued functions on $Q$.
 The two important subspaces mentioned above are then denoted as $L$ and $L[M]$ respectively.

 $L$ also has a finer structure. Indeed, if $ f \in L, $ define

$$ \|f\|_{Lip}= \sup \left\{ \frac{\|(f(x)-f(y))\|_Y}{d(x,y)}: x,y \in Q, x \neq y \right\}.$$

  Under the  norm $\| f \|_{BL} = \|f\|_{\infty}+ \|f\|_{Lip}$, $L$ becomes a Banach space denoted  $[BL, \|\cdot \|_{BL}]. $

  $ [BL^*, \|\cdot  \|_{BL}^*] $ denotes the continuous dual of this Banach Space and it has a closed convex subspace \begin{equation}\cP ^* = \{ \mu \in BL^*_+ ~| ~ \|\mu\|_{BL}^* = 1 \} . \begin{footnote}{ If $S$ is a subset of a Banach space, then $S_+$ is the intersection of $S$ with the positive cone. } \end{footnote} \end{equation}

 \begin{remark} $L$ and $BL$ are the same set, the topology is just different.
\end{remark}

Crucial to the success of our modeling efforts is the forming of the parameter space,  $ L(Q;\cP^*) \subset C(Q; BL^*),$ which models the mutation kernel.  It is a convex subset of   $C(Q; BL^*).$ \\

\emph{Some Algebra :} \\

Firstly we note that both $[C(Q;Y), \|\cdot \|_{\infty}] $ and  $ [BL(Q;Y),\|\cdot \|_{BL}] $ are also Banach Algebras and we have the inequality \begin{equation} \label{BA} \|fg \| \le \|f \| \|g \| \end{equation} holding in each space.

Secondly,  we  view $ \gamma \in L(Q; BL^*)$   as a family of bounded linear functionals indexed by $Q$. It has properties that need elucidating for our modeling purposes.  $ L(Q; BL^*)$ is a unital BL- module. Indeed if $f,~ g\in BL,$  $ \gamma \in L(Q;BL^*) $
 \begin{equation} \label{action0} (f \cdot \gamma) (q)[g] =f(q)\gamma(q)[g] \mbox{ and } \| f\gamma \|^*_{\infty}\le \|f\|_{\infty} \|\gamma\|_{\infty}^* \begin{footnote}{ If $ \gamma \in C(Q;BL^*)$, $ \| \gamma\|_{\infty}^* = \sup_{q \in Q} \|\gamma(q)\|_{BL}^* $} \end{footnote}.\end{equation}  We will denote this action simply as $ f\gamma$ since it is just pointwise multiplication. So one can multiply a family of functionals by a Lipschitz map and obtain another family of functionals. Moreover,
the new \textbf{uniform} normed product is no larger than the \textbf{uniformed} product of the norms.

Thirdly,
 $$ L \hookrightarrow L(Q;BL^*)  \mbox { by }   f \mapsto f(\cdot)\delta_{(\cdot)} $$  is an isometry. Where $ \delta_{(\cdot)}$ is the delta functional.

 This allows us to view a Lipschitz function, $f$,  as a \textbf{family} of bounded linear functionals on $ BL$  indexed by $ Q.$ Moreover this viewing preserves the uniform norm, i.e.
  \begin{equation*} \|f\|_{\infty} = \|f(\cdot)\delta_{(\cdot)}\|_{\infty}^*. \end{equation*}

 Fourthly, we need to ``multiply" a functional by a family of functionals.   Let $ M^*_b:= [M_{b}^*(BL;\R), \|\cdot\|^*_{BL}], $ denote the normed  $\mathbb{R}$ -Algebra of bounded maps of $BL$  into $\R$ where we have pointwise addition and multiplication and the norm defined as

  $$ \|\mu \|_{BL}^* = \sup_ {  g \in BL, g \ne 0}   \frac{  | \mu(g)|  }{ \|g\|_{BL}}  $$  

 If $$ \Sigma = [BL (Q; M^*_b), \|\cdot\|_{BL} ]$$  then $\Sigma $ is a $\mathbb{R}$- Algebra  under pointwise addition and multiplication and  $ M^*_b(BL;\R)$ is a $\Sigma$- module. Indeed,  under the action
  $$ \bullet: \Sigma \times M^*_b(BL;\R) \rightarrow M^*_b(BL;\R)$$ given by

 $$ (\gamma\bullet \mu) [g] = \mu \Bigl[ \gamma(\cdot)[g] \Bigr] ~~ \forall g \in BL, ~ \forall \gamma \in \Sigma  $$ we have an action.
 This is a  bounded Lipschitz   functional since $ \forall g \in BL, \gamma(\cdot)[g]$ is bounded and Lipschitz since $ \gamma \in BL(Q;M^*_{b}).$
 With respect to the normed  product we have \begin{equation} \label{action21} \|\gamma \bullet \mu\|_{BL}^* \le \|\gamma\|_{BL} \|\mu \|_{BL}^* .\end{equation}
  Moreover, if $ \mu \in BL^*_{+}$, \eqref{action21} becomes
  \begin{equation} \label{action21b} \|\gamma \bullet \mu\|_{BL}^* \le \|\gamma\|^*_{\infty} \|\mu \|_{BL}^* .\end{equation} where $$ \| \gamma\|^*_{\infty} = \sup_{ q \in Q} \|\gamma(q)\|_{BL}^*.$$

  $ \bullet$ above allows us to ``multiply" a functional,
   $ \mu \in M_b^*$, by a family of functionals $ \gamma \in \Sigma .$

   This new multiplication gives us some important information about our mutation parameter space $L(Q;BL^*).$

    Indeed,

    \begin{itemize}
    \item[(1)] First notice   $$ L(Q;BL^*) \times  BL^* \subset \Sigma \times M_b^*(BL; \R).$$ If we think of  $L(Q;BL^*)$  as $ [ BL(Q;BL^*), \|\cdot\|_{BL}]$ (same set different topology), then we actually have that $$ \bullet:  BL(Q;BL^*)\times BL^* \rightarrow BL^* $$ by
 \begin{equation}\label{action2} (\gamma \bullet \mu)[g]=\mu\Bigl[\gamma(\cdot)[g]\Bigr] .\end{equation}
  The $\bullet$ operation does \textbf{not }make $BL^* $ into a $BL(Q;BL^*)$- module since $BL(Q;BL^*)$ is not a ring . However, this restriction of $\bullet$ is \textbf{bilinear}.
   \item[(2)] Also note that if  $f \in BL$, then $f\bullet \mu $ is well defined as well. Indeed, from the thirdly observation in the \emph{Some Algebra} section we view $f$ as the family $\gamma(q) =f(q) \delta_q$, and
\begin{equation} \label{identity} (f\bullet \mu)[g] = (f  \delta )\bullet \mu[g] = \mu \Bigl [ f(\cdot)\delta_{( \cdot)}[g]\Bigr ] = \mu  [ f(\cdot) g( \cdot)] = \mu[fg]. \end{equation}
Furthermore \begin{equation} \label{action11} \|f \bullet \mu\|_{BL}^* \le \|f\|_{BL} \|\mu \|_{BL}^* .\end{equation}

\item[(4)] In all cases $ \bullet $ behaves nicely with respect to norm estimation in all norms. The normed product is no larger than the product of the norms.
\end{itemize}

\emph{Miscellaneous:} \\

 If $ \nu \in BL^*$, $$ B_{a}[\nu] = \{ \mu \in BL^* ~|~~ \|\mu -\nu \|_{BL}^* < a \} .$$

 Below $ L\cP^*[M] = L(Q;\mathcal{P}^*)[M]$ and likewise for $BL\cP^*[M] .$

 $\textbf{0}$ denote the zero functional and $1$ denotes the constant function that takes the value $1$.

For any time dependent mapping, $f(t)$,  we let $f' (t)=\frac{df}{dt} (t)$

\section{ Main Well-Posedness Theorem} \label{WP}

The following is the main theorem of this section.
\begin{theorem}\label{main}   Let  $X = \mathbb{R}\times {BL^*}  \times L(Q;\mathcal{P}^*).$ Then $[X,D_X] $ is a metric space where
 $$ D_X( (s_1, u_1, \gamma_1),(s_2,  u_2, \gamma_2)) = |s_1-s_2| + \|u_1 -u_2 \|_{BL}^* + \|\gamma_1 -\gamma_2\|_{\infty}^*. $$

Moreover there exists a global autonomous semiflow where
$$ \Phi:{\mathbb {R}_+} \times  X  \to X $$  satisfying the following:
\begin{enumerate}
\item  There exists a  continuous mapping $$ \varphi : \R_+ \times \R\times {BL^*}  \times BL(Q;\mathcal{P}^*) \rightarrow \R \times BL^* $$ such that
 $$ \Phi(t; (s,u, \gamma)) = (\varphi(t,s,u,\gamma), \gamma).$$

\item For fixed $(s,u, \gamma ) \in \R\times BL^* \times BL(Q;\mathcal{P}^*)$, the mapping $t \mapsto \varphi(t, s, u,\gamma) \in C( \R_+; \R\times BL^*) $ is the unique \emph{solution} to

\begin{equation} \left\{   \begin{split}
S'(t) &=    \Lambda -DS -  B(S(t),q)\bullet \mu(t)[1]\\
\mu'(t) & =B(S(t),\cdot)\gamma(\cdot)\bullet\mu(t)- D(S(t), \cdot)\bullet \mu(t)\\
[S(0),\mu(0)] & =[ s_0,\mu_0]
\end{split} \right.
\end{equation}

Moreover, if $ F$ is as in  \eqref{eq:F} and  $$ \cF : X \rightarrow X $$ by
$$  \cF[ \mu, \gamma] =[ F(\mu, \gamma), \gamma] \mbox{ ~~ and ~~ } \Phi'(t;(s,u,\gamma)) = [ \varphi'(t,s,u, \gamma), \gamma] $$

Then \begin {equation} \left\{\begin{array}{ll}
 \displaystyle {\Phi'}(t;x) & =  \cF [ \Phi(t;x)]    \\
\Phi(0;x)=x.
\end{array}\right.\end{equation}

\item   If   $ X_+ = \mathbb{R}_+\times  BL^*_+ \times L(Q; \mathcal{P}^*)$, then $ X_+$  is forward invariant under $\Phi $ i.e. $\Phi(t; X_+) \subset X_+ ,$ $\forall t \in \R_+ $.

\item  $ \forall N \in \mathbb{N},$ if   $ X_N = [0,N] \times[-N,N]\times  B_N[\textbf{0}]_+ \times L\cP^*[N] ,$  then
 $\Phi $ is Lipschitz continuous on $X_N.$

\end{enumerate}
\end{theorem}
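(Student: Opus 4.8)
The plan is to establish the four claims in the order they are stated, since each feeds the next. The metric-space assertion is routine: $D_X$ is a sum of the standard real metric, the dual norm $\|\cdot\|_{BL}^*$, and the uniform-dual distance $\|\cdot\|_{\infty}^*$, so symmetry and the triangle inequality are inherited componentwise, and completeness (though not strictly required for the statement) follows because $BL^*$ is a Banach space and $L(Q;\mathcal{P}^*)$ is closed in $C(Q;BL^*)$. The heart of the matter is constructing the semiflow, and for this I would \emph{freeze} the last coordinate: since the constraint equation leaves $\gamma$ unchanged, I treat $\gamma$ as a parameter and solve the $\R\times BL^*$-valued initial value problem for $[S,\mu]$. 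The right-hand side $F$ from \eqref{yy} is built from the pointwise products $B(S,\cdot)\bullet\mu$, $D(S,\cdot)\bullet\mu$, and $\gamma(\cdot)B(S,\cdot)\bullet\mu$; using the bilinearity of $\bullet$ together with the submultiplicative estimates \eqref{action21}, \eqref{action21b}, \eqref{action11}, and the Lipschitz hypotheses (A1)--(A2) on $B$ and $D$, I would show $F$ is locally Lipschitz on bounded subsets of $\R\times BL^*$. The standard Picard--Lindel\"of theorem in Banach spaces then yields a unique local solution, giving the map $\varphi$; setting $\Phi(t;(s,u,\gamma))=(\varphi(t,s,u,\gamma),\gamma)$ produces a candidate semiflow, and claim (1) reduces to continuity of $\varphi$ in all arguments, which is the standard continuous dependence on initial data and parameters. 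Claim (2) is then essentially a restatement: the constraint \eqref{CONSTRAINT} holds by construction, and the semiflow identities $\Phi(0;x)=x$, $\Phi(t+s;x)=\Phi(t;\Phi(s;x))$ follow from uniqueness of solutions.

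For forward invariance (claim (3)) I would argue that $X_+=\R_+\times BL^*_+\times L(Q;\mathcal{P}^*)$ is preserved. The resource coordinate stays nonnegative because $S'=\Lambda-DS-B(S,\cdot)\bullet\mu[1]$ and, by (A1), $B(0,q)=0$ forces $S'\ge \Lambda-DS>0$ whenever $S=0$, so the $S$-axis is never crossed from above. For the measure coordinate, nonnegativity of $\mu$ is the subtle point: I would examine the $\mu$-equation $\mu'=\gamma(\cdot)B(S,\cdot)\bullet\mu-D(S,\cdot)\bullet\mu$ and show that the gain term $\gamma(\cdot)B(S,\cdot)\bullet\mu$ keeps $\mu$ in the positive cone $BL^*_+$ (here the interpretation of $\gamma(q)$ as a probability-type functional mapping positive test functions to nonnegative numbers is what guarantees the offspring term is positivity-preserving), while the loss term $-D(S,\cdot)\bullet\mu$ is of the damping form. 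A clean way to see this is to rewrite the equation as $\mu' = -D(S,\cdot)\bullet\mu + (\text{nonnegative})$ and use a variation-of-constants / Gronwall argument showing that the positive cone is invariant under the linear flow generated by the $D$ term and that adding a cone-valued perturbation keeps the solution in the cone; the uniform lower bound $\varpi>0$ on $D$ from (A2) gives the needed contraction.

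Finally, for Lipschitz continuity on $X_N$ (claim (4)), the strategy is to obtain \emph{a priori} bounds on the solution over the relevant region so that all the $\bullet$-estimates can be applied with uniform constants, and then to run a Gronwall argument comparing two solutions. On $X_N$ the initial data satisfy $S_0\in[0,N]$, $\|\mu_0\|_{BL}^*\le N$, and $\|\gamma\|_{\infty}^*\le$ (the constant determined by $L\cP^*[N]$); combining these with the submultiplicative inequalities bounds $\|F(m_1,\gamma_1)-F(m_2,\gamma_2)\|$ by a constant (depending on $N$) times $D_X$ of the inputs, which after integrating and applying Gronwall yields Lipschitz dependence of $\varphi$, and hence of $\Phi$, on $X_N$. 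I expect the main obstacle to be the positivity preservation of the measure coordinate in claim (3): unlike the scalar $S$-equation where a boundary-sign argument suffices, showing that $\mu(t)\in BL^*_+$ requires carefully exploiting the cone structure of $BL^*$ and the positivity of the mutation kernel $\gamma$, since $BL^*$ is an ordered Banach space without a simple pointwise interpretation. The key technical device will be to treat the $-D(S,\cdot)\bullet\mu$ term as generating a positivity-preserving (indeed cone-contracting) linear semigroup and to verify that the birth-plus-mutation term $\gamma(\cdot)B(S,\cdot)\bullet\mu$ is cone-valued whenever $\mu$ is, so that the integral formulation of the $\mu$-equation maps the cone into itself.
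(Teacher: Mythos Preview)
Your outline matches the paper's strategy closely: freeze $\gamma$, show $F$ is locally Lipschitz via the $\bullet$-estimates and (A1)--(A2), invoke Picard--Lindel\"of, and handle positivity of $\mu$ by a variation-of-constants representation that pulls the $-D(S,\cdot)\bullet\mu$ term into an exponential damping factor so that the integral formulation maps the cone into itself. The paper does exactly this; in fact it runs the contraction mapping directly on the integral representation
\[
(T\zeta)(t)=\Bigl[e^{-Dt}s,\; e^{-\int_0^t D_N(\zeta_S(\tau),\cdot)\,d\tau}\bullet u\Bigr]+\int_0^t\Bigl[e^{-D(t-s)}F_{N11},\; e^{-\int_s^t D_N(\zeta_S(\tau),\cdot)\,d\tau}\bullet F_{N21}\Bigr]ds,
\]
so that positivity of $\varphi_\mu$ is manifest from the fixed point itself rather than proved after the fact; your anticipated ``main obstacle'' is thus dispatched by design. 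The $S$-positivity argument is the boundary differential inequality you describe.

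The one substantive device you are missing is \emph{truncation}. You pass from ``unique local solution'' to ``the map $\varphi$'' and then to a global semiflow without saying how, but local Lipschitzness of $F$ alone does not give global existence on $\R\times BL^*$: for $\mu\notin BL^*_+$ there is no obvious a~priori bound preventing finite-time blowup. The paper handles this by replacing $B,D$ with truncated versions $B_N,D_N$ (constant outside $[0,N]$), so that the resulting $F_N$ is globally bounded and Lipschitz; the truncated problem then has solutions on $[0,T]$ for every $T$ by contraction in a weighted norm $\|w\|_\lambda=\sup_t e^{-\lambda t}\|w(t)\|$. Since the truncated solution is continuous on $[0,T]$, it is bounded there, and choosing $M$ larger than that bound makes $F_M=F$ along the trajectory, so the truncated solution solves the original equation. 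Patching over $N$ yields the global $\varphi$. Your Gronwall argument for Lipschitz dependence on $X_N$ is then exactly what the paper does, again in the $\|\cdot\|_\lambda$ norm.
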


We now establish a few results that are needed to prove
Theorem \ref{main}.

\subsection{Local Existence and Uniqueness of Dynamical System}\label{WP1}

\emph{Truncation:}\\

With this background we prepare to obtain the semiflow that will model our evolutionary game. If $ F$ is the vectorfield defined in \eqref{eq:F} then for each $ N \in \mathbb{N}$, define $F_{N}$ as follows.
If $j $ is one of the functions $ B,D$  then we extend $j$ to $\R \times Q$ by setting $j_{N}(x,q) = j(0,q) $ for $x \le 0$ and
  $ j_{N}(x,q) = j(N,q) $ for $x \ge N
 $. Then $ {j}_{N}: \R \times Q \to \R_+$ is bounded and Lipschitz
continuous.  Let $ {F}_{N} (m, \gamma)
$ be the redefined vector field obtained by replacing $j$ with
$j_{N}.$


For each $(s,u, \gamma) \in \mathbb{R}\times BL^* \times BL(Q;\mathcal{P}^*)$,  we will resolve the following IVP first.

 \begin {equation} \left\{\begin{array}{ll}\label{M2}
  m'(t,s,u, \gamma) =  F_{N}(m,\gamma)  \\
m(0,s,u,\gamma)= u.
\end{array}\right.\end{equation}

Here  let \begin{equation} \label{eq:FT}F_{N}(m,\gamma) = F_N( s,\mu, \gamma) = [F_{N1}( s,\mu, \gamma), F_{N2}( s,\mu, \gamma)] \end{equation}
where

 \begin{equation}\label{yyT}\begin{split}
 F_{N1}(m,\gamma ) &= \underbrace {\Lambda +  B(s, \cdot) \bullet \mu[1]}_{\mbox{ $F_{N11}$}} - \underbrace{ D s}_{\mbox{ $F_{N12}$}}   \\
  F_{N2}(m, \gamma)&= \underbrace{\gamma( \cdot ) B({s}, \cdot) \bullet \mu}_{ \mbox{$ F_{N21}$}} -  \underbrace{ D(s, \cdot)\bullet {\mu}}_{ \mbox{$F_{N22}$ }}
  .\end{split}
\end{equation}

\begin{lemma} \label{LF}(Lipschitz $F_N$)

\begin{itemize}
\item[(i)] $\forall N \in \mathbb{N}$, there exists continuous $${F}_N :\mathbb{R}\times  BL^* \times BL(Q; \mathcal{P}^{*}) \rightarrow \mathbb{R}\times BL^*.$$

     \item[(ii)] $\forall a>0,$ $ \forall M >0 $, if $${F}_N:[-a,a] \times  \overline{ B_{a}[\textbf{0}]  } \times BL\cP^{*}[M] \rightarrow \mathbb{R}\times BL^* $$ or $${F}_N:[-a,a] \times  \overline{ B_{a}[\textbf{0}]  }_+ \times L\cP^{*}[M] \rightarrow \mathbb{R}\times BL^* $$ then $F_N$ is bounded and Lipschitz.
\end{itemize}
\end{lemma}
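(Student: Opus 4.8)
The plan is to prove (ii) first and deduce (i) from it, since boundedness together with the Lipschitz property on every set of the form $[-a,a]\times\overline{B_a[\textbf{0}]}\times BL\cP^*[M]$ exhibits $F_N$ as locally Lipschitz, hence continuous, on all of $\R\times BL^*\times BL(Q;\cP^*)$. For (ii) I would treat $\R\times BL^*$ as the product Banach space and split $F_N=[F_{N11}-F_{N12},\ F_{N21}-F_{N22}]$ as in \eqref{yyT}. Because a finite sum of bounded Lipschitz maps is again bounded and Lipschitz, and because pairing a bounded Lipschitz real map with a bounded Lipschitz $BL^*$-valued map gives a bounded Lipschitz $\R\times BL^*$-valued map, it suffices to handle each of the four summands separately on the indicated domain. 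Throughout I write $B_N(s)$ for the element $B_N(s,\cdot)\in BL$, and likewise $D_N(s)$.

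\emph{Step 1 (the truncated rates).} By construction $B_N,D_N$ are bounded on $\R\times Q$ (frozen at $B(0,\cdot),B(N,\cdot)$ and $D(0,\cdot),D(N,\cdot)$ outside $[0,N]$) and, by (A1)--(A2), Lipschitz in $s$ uniformly in $q$. What I actually need is the stronger statement that $s\mapsto B_N(s)$ and $s\mapsto D_N(s)$ are bounded Lipschitz maps from $\R$ into $(BL,\|\cdot\|_{BL})$, i.e. $\|B_N(s)\|_{BL}\le C_N$ and $\|B_N(s_1)-B_N(s_2)\|_{BL}\le L_N|s_1-s_2|$, and similarly for $D_N$. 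The uniform-norm parts of these bounds are immediate from (A1)--(A2); the $q$-Lipschitz-seminorm parts require controlling the $q$-Lipschitz constant of the difference $B_N(s_1)-B_N(s_2)$ linearly in $|s_1-s_2|$, i.e. extracting genuine joint $(s,q)$-regularity from the uniform-in-$q$ hypotheses. I expect this to be the \textbf{main obstacle}.

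\emph{Step 2 (the summands via the $\bullet$-algebra).} The term $F_{N12}=Ds$ is linear in $s$, hence bounded by $Da$ on $[-a,a]$ and Lipschitz. For $F_{N11}=\Lambda+B_N(s)\bullet\mu[1]$ I would use \eqref{identity} to write $B_N(s)\bullet\mu[1]=\mu[B_N(s)]$ and the dual-norm estimate behind \eqref{action11}, $|\mu[B_N(s)]|\le\|B_N(s)\|_{BL}\|\mu\|_{BL}^*$, for boundedness; the Lipschitz bound comes from the bilinear splitting
\[\mu_1[B_N(s_1)]-\mu_2[B_N(s_2)]=\mu_1[B_N(s_1)-B_N(s_2)]+(\mu_1-\mu_2)[B_N(s_2)]\]
combined with Step 1 and $\|\mu_i\|_{BL}^*\le a$. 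The term $F_{N22}=D_N(s)\bullet\mu$ is handled identically via \eqref{identity} and \eqref{action11}. The genuinely multilinear term is $F_{N21}=(B_N(s)\gamma(\cdot))\bullet\mu$: I form the product $B_N(s)\gamma(\cdot)$ through the module action \eqref{action0} and then apply $\bullet$, estimating its increment by the three-term splitting
\[(B_N(s_1)\gamma_1)\bullet\mu_1-(B_N(s_2)\gamma_2)\bullet\mu_2=\big[(B_N(s_1)-B_N(s_2))\,\gamma_1\big]\bullet\mu_1+\big[B_N(s_2)\,(\gamma_1-\gamma_2)\big]\bullet\mu_1+\big[B_N(s_2)\,\gamma_2\big]\bullet(\mu_1-\mu_2).\]
Here the two domains of (ii) diverge, which is exactly the reason both are stated: on $\overline{B_a[\textbf{0}]}\times BL\cP^*[M]$ (general $\mu$, $BL$-topology on the kernel) I apply \eqref{action21}, $\|\gamma\bullet\mu\|_{BL}^*\le\|\gamma\|_{BL}\|\mu\|_{BL}^*$, with $\|\gamma_i\|_{BL}\le 1+M$ and $\|B_N(s)\|_{BL}\le C_N$; on $\overline{B_a[\textbf{0}]}_+\times L\cP^*[M]$ (positive $\mu$, uniform topology on the kernel) I apply the sharper \eqref{action21b}, $\|\gamma\bullet\mu\|_{BL}^*\le\|\gamma\|_\infty^*\|\mu\|_{BL}^*$, which matches the $\|\cdot\|_\infty^*$ term of $D_X$ and needs only $\|B_N(s)\|_\infty$ together with $\|\gamma\|_\infty^*=1$.

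\emph{Step 3 (assembly and continuity).} Summing the four bounded Lipschitz pieces yields (ii). For (i), continuity on the positive cone is then immediate, since there the $\gamma$-dependence is Lipschitz directly in $\|\cdot\|_\infty^*$ through \eqref{action21b}, giving joint continuity together with the $(s,\mu)$-estimates; on the full space one uses instead the $BL$-topology estimate \eqref{action21}, and in either case the local Lipschitz property from (ii) delivers continuity of $F_N$ on $\R\times BL^*\times BL(Q;\cP^*)$. To summarize the difficulty: the $F_{N11}$ and $F_{N22}$ bounds genuinely require the full $BL$ norm of the rate maps $s\mapsto B_N(s),D_N(s)$ rather than merely their uniform norm, so the whole argument rests on the $BL$-norm Lipschitz regularity established in Step 1, which is where the real work lies.
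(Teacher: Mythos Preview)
Your overall architecture---prove (ii) first, deduce (i) from the exhaustion $\R\times BL^*\times BL(Q;\cP^*)=\bigcup_{N,M}[-N,N]\times\overline{B_N[\textbf{0}]}\times BL\cP^*[M]$, and split $F_N$ into the four pieces $F_{N11},F_{N12},F_{N21},F_{N22}$ with bilinear/trilinear increment decompositions---is exactly what the paper does. The trilinear splitting you write for $F_{N21}$ is the paper's, and your observation that the two domains in (ii) are distinguished precisely by whether one uses \eqref{action21} or \eqref{action21b} is also the paper's point.

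Where you diverge is in locating the difficulty. You flag as the ``main obstacle'' the Lipschitz continuity of $s\mapsto B_N(s),\,D_N(s)$ into $(BL,\|\cdot\|_{BL})$, i.e.\ control of $\|B_N(s_1)-B_N(s_2)\|_{Lip}$ linearly in $|s_1-s_2|$. On the positive-cone domain $[-a,a]\times\overline{B_a[\textbf{0}]}_+\times L\cP^*[M]$---which is the case the paper actually writes out---this is \emph{not needed}. The same device \eqref{action21b} that you correctly invoke for $F_{N21}$ applies equally to the $s$-increment pieces of $F_{N11}$ and $F_{N22}$: in $\mu_1[B_N(s_1)-B_N(s_2)]$ and in $[D_N(s_1)-D_N(s_2)]\bullet\mu_1$ the functional $\mu_1$ is positive, so \eqref{action21b} (with $f$ viewed as $f\delta$) bounds these by $\|B_N(s_1)-B_N(s_2)\|_\infty\,\|\mu_1\|_{BL}^*$ and $\|D_N(s_1)-D_N(s_2)\|_\infty\,\|\mu_1\|_{BL}^*$, and the sup-norm Lipschitz estimate is exactly what (A1)--(A2) give. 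The full $BL$-norm of $B_N(s),D_N(s)$ enters only through the $(\mu_1-\mu_2)$ terms, where $\mu_1-\mu_2$ need not be positive and one falls back on \eqref{action11}; but there only \emph{boundedness} of $\|B_N(s)\|_{BL},\|D_N(s)\|_{BL}$ uniformly in $s$ is required, not Lipschitz dependence on $s$. So your Step~1 obstacle dissolves on the positive cone, and your closing claim that ``the whole argument rests on the $BL$-norm Lipschitz regularity established in Step~1'' is an overstatement there.

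Your concern is more legitimate for the first domain $[-a,a]\times\overline{B_a[\textbf{0}]}\times BL\cP^*[M]$, where $\mu$ has no sign and one must use \eqref{action21}/\eqref{action11} throughout; then the $s$-increment terms do demand $\|B_N(s_1)-B_N(s_2)\|_{BL}\lesssim|s_1-s_2|$. The paper dismisses this case as ``straightforward'' by the same computation with \eqref{action21} in place of \eqref{action21b}, without isolating the extra joint-regularity this entails---so on that point you are being more careful than the paper, not less.
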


\begin{proof}
First notice that $(i)$ follows from  $(ii)$ since

\begin{equation} \R\times BL^* \times  BL(Q; \mathcal{P}^*) = \cup_{N , M \in \mathbb{N}}[-N,N]\times BL^* \cap \overline{{B_{N}[\textbf{0}]}} \times BL\cP^{*}[M]\begin{footnote}{ See \eqref{eq:union}} \end{footnote} \end{equation}

and   $$  \overline{{B_{N}[\textbf{0}]}} \subset  \overline{{B_{N+1}[\textbf{0}]}} \mbox{~ , ~} BL\cP^*[M] \subset BL\cP^*[M+1] .$$

We will prove the second condition in $(ii).$ The first is straightforward and the only real difference in the argument used below is that one uses the estimate in \ref{action21} instead of the estimate in \ref{action21b}. Let $ (s, \mu, \gamma)$ and  $(r, \nu, \lambda )$ be two points in $  [-a,a]\times \overline{ B_{a}[\textbf{0}]  }_+ \times L\cP^{*}[M].$ We  must find $ B_S, B_{\mu}, B_{\gamma} $ such that $$ \|{F}_N( s, \mu, \gamma) -{F}_N(r, \nu, \lambda)\|_{\R  \times BL^*} \leq B_S|s-r| + B_\mu \| \mu- \nu\|_{BL}^* +  B_{\gamma} \| \gamma- \lambda \|^*_{\infty} .$$

\begin{multline}
\|{F}_N(s,\mu, \gamma) - {F}_N(r,\nu, \lambda)\|_{\R\times BL^*}  =  ~ \\
 \big |{F}_{N1}(s, \mu, \gamma)-{F}_{N1}( r,\nu, \lambda)\big | + \big \| {F}_{N2}(s,\mu, \gamma)-{F}_{N2}( r,\nu, \lambda)\big \|_{BL}^*
\end{multline}
where
$$ F_{N1}(s, \mu,\gamma ) = \Lambda -D s +  B_{N}(s,\cdot) \bullet \mu[1] $$
$$ F_{N1}(r, \nu,\lambda ) = \Lambda -D r +  B_{N}(r,\cdot)\bullet \nu[1] .$$

Hence, \\ $ F_{N1}(s, \mu,\gamma )-F_{N1}(r, \nu,\lambda )= $  $D( r-s) + [B_{N}(s,\cdot)-B_{N}(r, \cdot)]\bullet \mu[1] +  B_{N}(r, \cdot)\bullet(\mu-\nu)[1]$

and
\begin{equation}\label{FN1} |F_{N1}(s, \mu,\gamma )-F_{N1}(r, \nu,\gamma )| \le   (D + \|B_N\|_{Lip} \|\mu\|_{BL}^* )| s-r| + \|B_{N}\|_{BL} \| \mu - \nu\|_{BL}^*.\end{equation}

For $F_{N2}$, 

 $$F_{N2}(s, \mu, \gamma)= \gamma( \cdot )B_{N}(s,\cdot)\bullet \mu -  D_{N}(s,\cdot)\bullet {\mu},$$
 $$ F_{N2}(r,\nu, \lambda)= \lambda( \cdot )B_{N}(r,\cdot ) \bullet \nu -  D_{N}(r,\cdot) \bullet {\nu}$$

and  \begin{equation}\begin{split}
F_{N2}(s, \mu, \gamma)- F_{N2}(r,\nu, \gamma) & = ( \gamma - \lambda)( \cdot) B_N(s,\cdot) \bullet \mu   ~ + \\
  \lambda( \cdot)(B_{N}(s,\cdot)- B_{N}(r,\cdot))\bullet \mu & +   \gamma( \cdot)B_{N}(r,\cdot)(\mu- \nu) ~ + \\
      (D_{N}(s,\cdot)-D_{N}(r,\cdot)) \bullet {\mu} & +   D_{N}(r,\cdot) \bullet (\mu -\nu).
     \end{split} \end{equation}

\noindent Hence,
\begin{multline*} \|F_{N2}(s, \mu, \gamma)- F_{N2}(r,\nu, \gamma)\|_{BL}^* \le  \|\gamma - \lambda \|^*_{\infty} \|B_N \|_{\infty} \|\mu \|_{BL}^* ~ + \\
 \| \lambda \|^*_{\infty} \|B_{N}\|_{Lip}\|\mu\|_{BL}^* |s-r| ~  + ~  \| \gamma \|_{BL} \|B_{N}\|_{BL}\|\mu-\nu\|_{BL}^* ~ + ~ \|D_{N}\|_{Lip}\| \mu \|_{BL}^* |s-r| ~ +~  \\ \|D_{N}\|_{BL}\|\mu-\nu\|_{BL}^* \end{multline*}

and \begin{multline} \label{FN2}\|F_{N2}(s, \mu, \gamma)- F_{N2}(r,\nu, \gamma)\|_{BL}^*  \le
\Bigl (\| \lambda \|^*_{\infty} \|B_{N}\|_{Lip}\|\mu\|_{BL}^*   +   \|D_{N}\|_{Lip}\| \mu \|_{BL}^* \Bigr)|s-r| ~ + \\  \Bigl ( \| \gamma \|_{BL} \|B_{N}\|_{BL} + \|D_{N}\|_{BL} \Bigr )\|\mu - \nu\|_{BL}^* + \|\gamma - \lambda \|^*_{\infty} \|B_N \|_{\infty} \|\mu \|_{BL}^*.
\end{multline}

 Hence, \begin{equation} \begin{split}\label{FS}
  \|{F}_N(s,\mu, \gamma) - {F}_N(r,\nu, \lambda)\|_{\R\times BL^*} \le  & |s-r|\Bigl (\| \lambda \|^*_{\infty} \|B_{N}\|_{Lip}\|\mu\|_{BL}^*   +   \|D_{N}\|_{Lip}\| \mu \|_{BL}^*  \\
    +~  D ~  + ~  \|B_N\|_{Lip} \|\mu\|_{BL}^*\Bigr ) + & \Bigl ( \| \gamma \|_{BL} \|B_{N}\|_{BL} + \|D_{N}\|_{BL} + \|B_{N}\|_{BL} \Bigr )\|\mu - \nu\|_{BL}^*  \\
    +~  \|\gamma - \lambda \|^*_{\infty} \|B_N \|_{\infty} \|\mu \|_{BL}^*. \end{split} \end{equation}

So if $C_W$ is such that $ |s| +\|\mu\|_{BL}^* + \| \gamma \|^*_{\infty} \leq C_W$ for $(s,\mu, \gamma)
\in [-a,a] \times  \overline{ B_{a}[\textbf{0}]_+  } \times L\cP^{*}[M] $ 
our result is immediate.

\end{proof}


\begin {lemma}\label{E}(Estimates)  Let  $T>0.$  If $ \zeta, \beta \in C( [0,T];BL^*)  $ and  $ s,t \in  [0,T]$  we have the following estimates:
\begin{enumerate}
%
\item
\begin{itemize}
\item[(a)] \mbox{ As a function of } $q$,
\begin{align}
 \|e^{-\int_{s}^{t}{D}_{N}(\zeta(\tau)(1),q)d\tau}\|_{Lip} & \le \|D_N(\cdot,\cdot)\|_{Lip} T~, & \|e^{-\int_{s}^{t}{D}_{N}(\zeta(\tau)(1),q)d\tau}\|_{\infty} \le 1.
\end{align}
\item[(b)] If  \begin{equation} \begin{split} F(q) &  = e^{-\int_{s}^{t}{D}_{N}(\zeta(\tau)(1),q)d\tau}-e^{-\int_{s}^{t}{D}_{N}(\beta(\tau)(1),q)d\tau} \\
   \|F\|_{\infty}  & \le \|{D}_{N}(\cdot,\cdot)\|_{BL}\int_{s}^{t}\| \zeta(\tau)- \beta(\tau)\|^*_{BL}d\tau .\end{split} \end{equation}
 \end{itemize}
\end{enumerate}
\end{lemma}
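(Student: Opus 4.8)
The plan is to reduce both estimates to two elementary scalar facts: the map $x \mapsto e^{-x}$ is nonincreasing, bounded by $1$, and $1$-Lipschitz on $[0,\infty)$ (the last via the mean value theorem, since $|e^{-a}-e^{-b}| = e^{-c}|a-b| \le |a-b|$ for $a,b\ge 0$ and some intermediate $c\ge 0$); and, by (A2) together with the constant-extension definition of the truncation, $D_N \ge \varpi > 0$, so that each exponent $\int_s^t D_N(\zeta(\tau)(1),q)\,d\tau$ is nonnegative whenever $s \le t$. Throughout I would abbreviate $g_\zeta(q) = \int_s^t D_N(\zeta(\tau)(1),q)\,d\tau$ and $g_\beta(q) = \int_s^t D_N(\beta(\tau)(1),q)\,d\tau$, so the two functions under study are $q \mapsto e^{-g_\zeta(q)}$ in part (a) and $F = e^{-g_\zeta} - e^{-g_\beta}$ in part (b).

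For part (a), the uniform bound is immediate: $g_\zeta(q) \ge 0$ gives $0 < e^{-g_\zeta(q)} \le 1$ pointwise, and taking the supremum over $q \in Q$ yields $\|e^{-g_\zeta}\|_\infty \le 1$. For the Lipschitz bound I would fix $q_1,q_2 \in Q$ and chain the two facts:
\[ |e^{-g_\zeta(q_1)} - e^{-g_\zeta(q_2)}| \le |g_\zeta(q_1) - g_\zeta(q_2)| \le \int_s^t |D_N(\zeta(\tau)(1),q_1) - D_N(\zeta(\tau)(1),q_2)|\,d\tau, \]
using $1$-Lipschitzness of $e^{-x}$, then moving the absolute value inside the integral. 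The integrand is bounded by $\|D_N\|_{Lip}\,d(q_1,q_2)$ (Lipschitz continuity in $q$), and integrating over an interval of length $\le T$ produces $\|D_N\|_{Lip}\,T\,d(q_1,q_2)$; dividing by $d(q_1,q_2)$ and taking the supremum gives the stated bound.

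Part (b) has the identical skeleton, but now the Lipschitz continuity is used in the first ($S$) variable. Pointwise,
\[ |F(q)| = |e^{-g_\zeta(q)} - e^{-g_\beta(q)}| \le |g_\zeta(q) - g_\beta(q)| \le \int_s^t |D_N(\zeta(\tau)(1),q) - D_N(\beta(\tau)(1),q)|\,d\tau. \]
I would bound the integrand by $\|D_N\|_{Lip}\,|\zeta(\tau)(1) - \beta(\tau)(1)|$, then rewrite $\zeta(\tau)(1) - \beta(\tau)(1) = (\zeta(\tau) - \beta(\tau))[1]$ as a dual pairing and estimate
\[ |(\zeta(\tau) - \beta(\tau))[1]| \le \|\zeta(\tau) - \beta(\tau)\|_{BL}^*\,\|1\|_{BL} = \|\zeta(\tau) - \beta(\tau)\|_{BL}^*, \]
since $\|1\|_{BL} = \|1\|_\infty + \|1\|_{Lip} = 1 + 0 = 1$. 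Bounding $\|D_N\|_{Lip} \le \|D_N\|_{BL}$ and taking the supremum over $q$ then reproduces the asserted inequality exactly.

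The computations are routine; the only genuine care needed is notational bookkeeping around the symbol $\|D_N\|_{Lip}$. It is cleanest to read it as the joint Lipschitz constant of $D_N$ on $\R \times Q$, which by (A1)--(A2) together with the $q$-regularity built into the $BL(Q)$ formulation is finite and is not increased by the constant-extension truncation; this single constant dominates the partial Lipschitz constant in $q$ used in (a) and the partial Lipschitz constant in $S$ used in (b), so there is no real ambiguity. The other point worth flagging is that the uniform bound $\le 1$ and the nonnegativity of the exponents tacitly assume $s \le t$ --- the ordering in which the lemma is invoked once the solution is written in Duhamel form --- and for $s > t$ one simply inserts absolute values around the integrals, leaving the final bounds unchanged.
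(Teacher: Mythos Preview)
Your proof is correct and follows essentially the same approach as the paper: both arguments reduce to the $1$-Lipschitzness of $x\mapsto e^{-x}$ on $[0,\infty)$ (via the mean value theorem, with the intermediate factor $e^{-\theta}\le 1$) together with the Lipschitz continuity of $D_N$ in the relevant variable. Your version is in fact slightly more explicit than the paper's, since you spell out the dual-pairing step $|(\zeta(\tau)-\beta(\tau))[1]|\le \|\zeta(\tau)-\beta(\tau)\|_{BL}^*\|1\|_{BL}=\|\zeta(\tau)-\beta(\tau)\|_{BL}^*$ and flag the tacit assumption $s\le t$, both of which the paper leaves implicit.
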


\begin{proof}

  \begin{itemize}
\item[(a)]
  Using the mean value theorem on the $C^{\infty}( \mathbb{R})$ function, $e^x$, there exists $\theta(s,t)>0$ such that $$ \begin{array}{lll} && |e^{-\int_{s}^{t} {D}_{N}(\zeta(\tau)(1), \hat q) d\tau}
  - e^{-\int_{s}^{t} {D}_N(\zeta(\tau)(1), q) d\tau} | \\
& \leq &  e^{-\theta} |\int_{s}^{t}
\bigl[{ D}_N(\zeta(\tau)(1), \hat q) -
 {D}_N( \zeta(\tau)(1), q) \bigr] d\tau \bigr | \\
& \leq &    \|{D}_N(\cdot,\cdot)\|_{Lip}T d(\hat q, q).  \\
 \end{array}$$

\item[(b)] Using the mean value theorem on the $C^{\infty}( \mathbb{R})$ function, $e^x$, there exists $ \theta = \theta(s,t) > 0$, such that

 $$ \begin{array}{lll} |F(q)| & = & |e^{-\int_{s}^{t} {D}_{N}(\zeta(\tau)(1),q) d\tau} - e^{-\int_{s}^{t} {D}_N(\beta(\tau)(1),q) d\tau} | \\
& \leq &  e^{-\theta} |\int_{s}^{t}
\bigl[{ D}_N(\zeta(\tau)(1), q) -
 {D}_N(\beta(\tau)(1), q) \bigr] d\tau  | \\
 & \leq &  \int_{s}^{t} \|{D}_N(\cdot,\cdot)\|_{BL}\| \zeta(\tau)- \beta(\tau)\|_{BL}^*d\tau
  .\end{array}$$

\end{itemize}

\end{proof}


\begin{proposition} \label{local}If $  T, M >0 $, $ N \in \mathbb{N}$ let  $ F_N$ be as in \eqref{yyT}. There exists  continuous $$\varphi _{NM} : [0,T] \times [-N,N] \times B_N[\textbf{0}] \times L\cP^*[M] \rightarrow BL^*$$  satisfying:

 \begin{enumerate}

 \item For each $ (s,u, \gamma) \in  [-N,N] \times B_N[\textbf{0}] \times BL\cP^*[M] $,  $ t \mapsto  \varphi_{NM}(t,s,u,\gamma) $, is the unique solution to

 \begin {equation} \left\{\begin{array}{ll}\label{eq:M2}
  m'(t) =  F_{N}(m(t),\gamma)  \\
m(0)= u.
\end{array}\right.\end{equation}
in  $ C([0,T]; \mathbb{R} \times BL^*).$
\item $ \varphi _{NM} \bigl( [0,T]\times [0,N] \times B_N[\textbf{0}]_+ \times L\cP^*[M] \bigr )\subset \mathbb{R}_+\times BL^*_{+} $

\item If  $\varphi_{NM}: [0,T] \times [-N,N]\times B_N[\textbf{0}]_+ \times L\cP^*[M] \rightarrow \mathbb{R} \times BL^*$  then  $\varphi_{NM}$ is  Lipschitz continuous.

\end{enumerate}

  \end{proposition}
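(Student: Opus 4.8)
The plan is to read \eqref{eq:M2} as an abstract ODE in the Banach space $\R\times BL^*$ (with $m=(S,\mu)$ and initial datum $(s,u)$) and to solve it by a Picard--Lindel\"of fixed point argument, using that $F_N(\cdot,\gamma)$ is Lipschitz on bounded sets by Lemma \ref{LF}. First I would recast the problem as the integral (mild) equation $m(t)=(s,u)+\int_0^t F_N(m(\tau),\gamma)\,d\tau$. The one wrinkle is that the boundedness and Lipschitz constants of Lemma \ref{LF}(ii) depend on the radius $a$, so $F_N$ is only locally Lipschitz. I would dispose of this by an a priori estimate: reading off \eqref{yyT} and applying \eqref{action21}, \eqref{action21b} and \eqref{action11}, one sees $F_N$ has at most affine growth, $\|F_N(m,\gamma)\| \le C_1 + C_2\|m\|$ with $C_1,C_2=C(N,M)$. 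A Gronwall inequality then confines every solution issuing from $[-N,N]\times B_N[\textbf{0}]$ to a fixed ball $[-a,a]\times\overline{B_a[\textbf{0}]}$ over $[0,T]$, with $a=a(N,M,T)$.

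On that ball $F_N(\cdot,\gamma)$ is genuinely Lipschitz with a single constant $L=L(N,M,T)$, so the Picard operator $\Psi(m)(t)=(s,u)+\int_0^t F_N(m(\tau),\gamma)\,d\tau$ becomes a contraction on $C([0,T];\R\times BL^*)$ once the sup-norm is replaced by the Bielecki norm $\|m\|_\lambda=\sup_t e^{-\lambda t}\|m(t)\|$ with $\lambda>L$; its unique fixed point is the unique solution on all of $[0,T]$ at once, giving statement (1). (The local nature of $L$ is handled in the standard way by first radially truncating $F_N$ to agree with it on the confining ball, or equivalently by a short-time contraction plus continuation; the a priori bound guarantees no blow-up.) For statement (3) I would compare two solutions $m,\tilde m$ with data $(s,u,\gamma)$, $(r,v,\lambda)$, subtract their integral equations, and split $F_N(m,\gamma)-F_N(\tilde m,\lambda)$ into a state increment and a parameter increment, each controlled by Lemma \ref{LF}. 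This yields $\|m(t)-\tilde m(t)\| \le \big[\,|s-r|+\|u-v\|_{BL}^*+L'T\|\gamma-\lambda\|_\infty^*\,\big]+L\int_0^t\|m-\tilde m\|\,d\tau$, and Gronwall gives Lipschitz dependence on $(s,u,\gamma)$ uniformly in $t$; Lipschitz continuity in $t$ is immediate from $\|m'(t)\|=\|F_N(m(t),\gamma)\|\le C_1+C_2 a$, so $\varphi_{NM}$ is jointly Lipschitz on the full domain.

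The genuinely delicate point is forward invariance of the positive cone, statement (2), and this is where Lemma \ref{E} is meant to be used. Here the naive Picard iteration is unsuitable because the death term $-D_N\bullet\mu$ is sign-indefinite; instead I would pass to the variation-of-constants form that factors the diagonal death part through the integrating factor $e^{-\int_\tau^t D_N(S(\sigma),\cdot)\,d\sigma}$ controlled in Lemma \ref{E}. Using the identity \eqref{identity}, the death-only propagator $(V(t,\tau)\nu)[g]=\nu[e^{-\int_\tau^t D_N(S(\sigma),\cdot)\,d\sigma}\,g]$ maps $BL^*_+$ into itself, and Duhamel gives $\mu(t)=V(t,0)u+\int_0^t V(t,\tau)\big[\gamma(\cdot)B_N(S(\tau),\cdot)\bullet\mu(\tau)\big]\,d\tau$. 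Since every $\gamma(q)\in\cP^*$ is a positive functional and $B_N\ge 0$, the birth term sends positive functionals to positive functionals, so the Picard iteration built from this Duhamel equation, started at $u\in BL^*_+$, remains in the (closed) positive cone and converges to the solution, forcing $\mu(t)\ge 0$. The resource component is handled in parallel: $S(t)=e^{-Dt}s+\int_0^t e^{-D(t-\tau)}[\Lambda+B_N(S(\tau),\cdot)\bullet\mu(\tau)[1]]\,d\tau\ge e^{-Dt}s\ge 0$, because $B_N\ge 0$, $\mu\ge 0$ and $\Lambda\ge 0$. I expect the cone-preservation of $V$ and the convergence of the positivity-preserving iteration to be the main technical hurdle, with the existence, uniqueness and Lipschitz statements being routine Picard--Lindel\"of and Gronwall arguments once the estimates of Lemma \ref{LF} are in hand.
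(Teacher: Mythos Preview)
Your plan is essentially the paper's own argument: a Bielecki-norm contraction for existence and uniqueness, the Duhamel/integrating-factor representation for positivity of the $\mu$-component (this is exactly the operator \eqref{irep} the paper writes down, and Lemma~\ref{E} is used for the contraction estimate on that operator just as you anticipate), and a Gronwall argument for Lipschitz dependence on the data. Your Picard-iteration justification of $\mu(t)\in BL^*_+$ is in fact slightly cleaner than the paper's one-line ``obvious from \eqref{irep}'' claim, since it avoids the apparent circularity of needing $\mu(s)\ge 0$ inside $F_{N21}$ to conclude $\mu(t)\ge 0$.

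There is, however, one genuine slip in your positivity argument for the resource $S$. In the actual model the consumption term carries a \emph{minus} sign, $S'=\Lambda-DS-B_N(S,\cdot)\bullet\mu[1]$ (cf.\ \eqref{xxx}; the $+B$ in \eqref{yy} and \eqref{yyT} is a typo, and the paper silently corrects it inside the proof). With the correct sign your Duhamel formula reads
\[
S(t)=e^{-Dt}s+\int_0^t e^{-D(t-\tau)}\bigl[\Lambda-B_N(S(\tau),\cdot)\bullet\mu(\tau)[1]\bigr]\,d\tau,
\]
and the bracketed integrand is not nonnegative in general, so the inequality $S(t)\ge e^{-Dt}s$ fails. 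The paper handles $S\ge 0$ differently, by a differential-inequality contradiction argument that exploits assumption (A1), namely $B(0,q)=0$: if $S$ first vanished at some $T_0$, then for $0\le t<T_0$ one has $B_N(S(t),q)\le\|B_N\|_{Lip}\,S(t)$, hence $S'(t)\ge -\bigl(D+\|B_N\|_{Lip}\widehat N\bigr)S(t)$, forcing $S(T_0)>0$, a contradiction. Replace your $S$-positivity step with this argument and the rest of your outline matches the paper.
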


\begin{proof}   For $ w \in W = C([0,T] ; \mathbb{R}\times BL^*)$ and $\lambda >0 $, define $$ \|w \|_{\lambda} = \sup_{t \in [0,T]} e^{-\lambda t} \|w(t)\|_{BL}^*.$$ It is an exercise to show that $[W, \| \cdot \|_{\lambda}]$ is a Banach space. In fact $ \|\cdot\|_{\infty}$ and
$ \|\cdot\|_{\lambda}$ are equivalent.\\

\emph{Unique local solution to  \eqref{eq:M2}:}\\

 Using  standard techniques for locally Lipschitz vector fields with a parameter into a Banach space, Lemma \ref{LF} relays that we have a unique solution to \eqref{eq:M2} on $ [0,T]$ for any $(s,u, \gamma) \in  [-N,N] \times B_N[\textbf{0}] \times BL\cP^*[M] .$ We can use a Lipschitz argument similar to the one below to show that this mapping is indeed Lipschitz.

 We label this solution $ \varphi_{NM}( \cdot) \equiv \varphi_{NM}(\cdot ,s,u, \gamma)$ (to denote the dependence on  $(s,u, \gamma)$ ). \\


\emph{ Forward invariance of $[0,N] \times B_{N}[0]_+ \times L\cP^*[M]$:} \\

Let $ (s, u, \gamma) \in [0,N] \times B_{N}[0]_+ \times L\cP^*[M], $ if  $ [W, \|\cdot\|_{\lambda}]$ is as above define

$$ W_{\widehat{N}} = \{ \zeta \in W ~ | ~ \zeta( [0,T]) \subset \mathbb{R} \times \overline{B_{\widehat{N}}[0]} \}  \mbox{ where } \widehat{N}>  s + N +(\|F_{N11}\|_{\infty} + \|F_{N21}\|_{\infty})T . $$ Obviously $ W_{\widehat{N}}$ is a closed subspace of $W$ and hence is a complete metric space.

Recall  \eqref{yyT} \begin{equation*} \begin{split}
 F_{N1}(s,\mu,\gamma ) &= \underbrace {\Lambda -  B(s,\cdot) \bullet \mu[1]}_{\mbox{ $F_{N11}$}} - \underbrace{ D s}_{\mbox{ $F_{N12}$}}   \\
  F_{N2}(s, \mu, \gamma)&= \underbrace{\gamma( \cdot ) B(s, \cdot) \bullet \mu}_{ \mbox{$ F_{N21}$}} -  \underbrace{ D(s, \cdot)\bullet {\mu}}_{ \mbox{$F_{N22}$ }}
  .\end{split}
\end{equation*}
If $ \zeta =[\zeta_S, \zeta_{\mu}] \in W_{\widehat{N}} $ define

\begin{equation}\label{irep} \begin{split}
(T \zeta)(t) & = \Bigl[ e^{-Dt}s, e^{-\int _{0}^{t}
D_{N}(\zeta_S(\tau)(1), \cdot)d\tau} \bullet u \Bigr] + \int_{0}^t \Bigl [e^{-D(t-s)}F_{N11}[ \zeta(s),\gamma],
 e^{-\int _{s}^{t} D_{N}(\zeta_S(\tau)(1), \cdot)d\tau}\bullet  \\ F_{N21}[\zeta(s), \gamma]\Bigr ] ds .
\end{split}
\end{equation}

\emph{Contraction Mapping : }\\

From our choice of $ (s,u, \gamma)$ and $\widehat{N}$ ,

$$ T :W_{\widehat{N}} \rightarrow W_{\widehat{N}}.$$

Indeed, if $ \zeta \in W_{\widehat{N}}$, then obviously  $T\zeta$  is continuous in $t$.  Furthermore since $F_{N21}[\cdot, \cdot], F_{N11}[ \cdot, \cdot]$  has the same properties as $ F_N $, namely being uniformly bounded and Lipschitz  on  $[0,N]\times B_{\widehat{N}}[\textbf{0}]_+ \times L\cP^*[M]$, we can use \eqref{action21b}, Lemma \ref{LF}  and Lemma \ref{E} to obtain

\begin{equation} \begin{split}
\|(T \zeta)(t)\|_{BL}^*  & \le s +  \| u\|_{BL}^* +  T ( \|F_{N11} \|_{\infty} + \|F_{N21} \|_{\infty}).
\end{split}
\end{equation}
 Hence $T$ is indeed a mapping from  $W_{\widehat{N}}$ into $W_{\widehat{N}}.$

Moreover for the above choice of $ (s,u, \gamma) $, $T$ is a contraction mapping. Indeed, first notice that since
$ u \in BL^*_+, $ if $ g \in BL(Q),~ \|g\|_{BL} \le 1 , $
 \begin{equation} \begin{split}
 ( e^{-\int _{0}^{t}D_{N}(\zeta_S(\tau)(1), \cdot)d\tau} - e^{-\int _{0}^{t} D_{N}(\beta_S(\tau)(1), \cdot)d\tau})\bullet u [g]
  & =  u [ ( e^{-\int _{0}^{t}D_{N}( \zeta_S(\tau)(1), \cdot)d\tau} - e^{-\int _{0}^{t}
D_{N}( \beta_S(\tau)(1), \cdot)d\tau})g(\cdot)] \\
  \qquad \le  u [ | e^{-\int _{0}^{t}D_{N}( \zeta_S(\tau)(1), \cdot)d\tau} - e^{-\int _{0}^{t}D_{N}( \beta_S(\tau)(1), \cdot)d\tau}) &g (\cdot)|]
 \le  \Bigl (\|{D}_{N}(\cdot,\cdot)\|_{BL} \\
\int_{s}^{t}\| \zeta(\tau)- \beta(\tau)\|^*_{BL}d\tau \Bigr )
u[|g(\cdot)|] \le   \Bigl (\|{D}_{N}(\cdot,\cdot)\|_{BL} & \int_{s}^{t}\| \zeta(\tau)- \beta(\tau)\|^*_{BL}d\tau \Bigr ) \|u\|_{BL}^* \\
 \mbox{  The last two estimate use } Lemma ~\ref{E}.\end{split}\end{equation}

Now if $ \zeta, \beta \in W_{\widehat{N}} , $

\begin{equation} \begin{split}
  (T\zeta -T\beta)(t) & = \Bigl [\int_0^t e^{-D(t-s)}\Bigl (  B_N( \zeta_S(s), \cdot)\bullet [ \zeta_{\mu} - \beta_{\mu}](s) +  [ B_N( \zeta_S(s), \cdot) - B_N( \beta_S(s)), \cdot]\bullet \beta_{\mu}\Bigr )ds,  \\
  &  ( e^{-\int _{0}^{t}D_{N}( \zeta_S(\tau)(1), \cdot)d\tau} - e^{-\int _{0}^{t}D_{N}( \beta_S( \tau)(1), \cdot)d\tau})\bullet u  \\
  & \quad +  \int_{0}^t e^{-\int _{s}^{t}D_{N}( \zeta_S(\tau)(1), \cdot)d\tau} \bullet \Bigl ( F_{N21}[\zeta(s), \gamma]- F_{N21}[ \beta(s),\gamma] \Bigr )ds \\
  & \quad + \int_0^t \Bigl( e^{-\int _{s}^{t}
D_{N}( \zeta_S(\tau)(1), \cdot)d\tau} - e^{-\int _{s}^{t} D_{N}( \beta_S(\tau)(1), \cdot)d\tau} \Bigr )\bullet F_{N21}[\beta(s), \gamma] ds \Bigr ]
  \end{split} \end{equation}
and
  \begin{equation} \begin{split}
  \|T\zeta -T\beta\|_{\R \times{BL}^*} & \le \|B_{N}\|_{\infty}\int_0 ^t  \|\zeta_{\mu}(s) - \beta_{\mu}(s)\|_{BL}^* + \|\beta_{ \mu}\|_{BL}^*  \| B_N(\cdot, \cdot)\|_{Lip} \int_0^t|\zeta_S(s)- \beta_S(s)|ds \\
  & + \|D_N(\cdot, \cdot) \|_{BL}\|u\|_{BL}^*  \int_0^t \|\zeta_{\mu}(s) -\beta_{\mu}(s)\|_{BL}^*ds  \\
  & \quad +  \|F_{N21}(\cdot, \cdot)\|_{Lip} (\|D_N(\cdot,\cdot)\|_{BL} T + 1) \int_0^t \|\zeta_{\mu}(s) -\beta_{\mu}(s)\|_{BL}^*ds \\
  & \quad + T \|F_{N21}\|_{\infty}\|D_N(\cdot, \cdot) \|_{BL}  \int_0^t \|\zeta_{\mu}(\tau) -\beta_{\mu}(\tau)\|_{BL}^*ds .
  \end{split} \end{equation}

  If  \begin{equation} \begin{split} N_T & =  \max \Bigl \{ \|D_N(\cdot, \cdot) \|_{BL}\|u\|_{BL}^*  + \|F_{N21}(\cdot, \cdot)\|_{Lip}( \|D_N(\cdot,\cdot)\|_{BL} T + 1) \\
   & +  T \|F_{N21}\|_{\infty}\|D_N(\cdot, \cdot) \|_{BL} + \|B_{N}\|_{\infty},\|\beta_{ \mu}\|_{BL}^*  \| B_N(\cdot, \cdot)\|_{Lip} \Bigl \} \end{split} \end{equation}
\begin{equation}
  e^{-\lambda t} \|(T\zeta)(t) -(T\beta)(t)\|_{BL}^*  \le  N_T \int_0^t e^{-\lambda(t-s)} e^{-\lambda s}\|\zeta(s) -\beta(s)\|_{BL}^*ds.
   \end{equation}
 Hence,

   \begin{equation}\begin{split}
  \|T\zeta -T\beta\|_{\lambda } & \le  N_T \Bigl ( \sup_{t\in [0,T]} \int_0^t e^{-\lambda(t-s)}ds\Bigr ) \|\zeta -\beta\|_{\lambda}  \\
  & \le \frac{N_T}{\lambda} \|\zeta -\beta\|_{\lambda}.
  \end{split} \end{equation}
  Which is a contraction for   $\lambda$  large  enough.

We label this fixed point $ \varphi \equiv \varphi_{NM+} $  and make the claim that  $ \varphi([0,T]) \subset \mathbb{R}_+ \times BL^*_{+}.$ \\

 Indeed, it is obvious from \eqref{irep} that $ \varphi_{\mu} = (\varphi_{NM+})_{\mu} $ is nonnegative. For $ \varphi_{S} $ notice if we differentiate we get $$ {\varphi_S'}(t) =  \Lambda -D\varphi_S(t) -  B_N(\varphi_S(t), \cdot)\bullet \varphi_{\mu}.$$

    Suppose that $ \varphi_S(t)$ is not positive for all $ t \in [0,T]$. Since $\varphi_S(0)>0$, there is a point $T_0$ with $\varphi_S(T_0) = 0 $ and $\varphi_S(t)>0 $ for $ 0 \le t <T_0.$ For $ 0 \le t \le T_0 $,
     $$ \varphi_{S}'(t) > -\varphi_S(t)D -  \|B_{N}(\cdot,\cdot)\|_{Lip}\varphi_S(t)\widehat{N}= -[D+ \|B_{N}(\cdot,\cdot)\|_{Lip}\widehat{N}] \varphi_S(t). $$ Hence, $ \varphi_S(T_0) > e^{ -(D \|B_{N}(\cdot,\cdot)\|_{Lip}\widehat{N})t} >0 $ by (A1). Which is a contradiction. Hence $\varphi_{NM+} $ is positive invariant.

\emph{Local solution for \eqref{eq:M2} :}\\

 Indeed, it is a simple exercise to show that
 \begin{equation} \label{verify} \begin{split}
  \varphi'_{NM+}(t) & =F_N[\varphi_{NM+}(t), \gamma]
   \end{split} \end{equation}

and obviously from the integral representation \eqref{irep},  $$\varphi_{NM+} (0;s,u, \gamma) = [s,u] ,  ~\forall [s,u ]\in [0,N]\times B_{N}[\textbf{0}]_+.$$

By uniqueness of solution  $$  \varphi_{NM}(t,s,u, \gamma) =\varphi_{NM+}(t,s, u, \gamma) \mbox{ on }  [0,T] \times [0,N] \times B_N[\textbf{0}]_+ \times L\cP^*[M] .$$\\

\emph{ Lipschitz on $ [0,N]\times [-N,N] \times {B_N[\textbf{0}]}_+ \times L \cP^*[M]$ :}\\ 

Looking at the right hand side in \eqref{eq:M2}, since  $ F_N$ is continuous by Lemma \ref{LF} we see that $\varphi_{NM}$ is actually $C^{1}([0,T])$ .       Hence, $ \forall (s,u, \gamma) \in [0,N]\times {B_N[\textbf{0}]}_+ \times L\cP^*[M],$  $ \varphi_{NM}( \cdot,s,u, \gamma)$ is Lipschitz on $[0,T]$ and  the Lipschitz bound does not depend on the variables $s, u, \gamma$. It only depends on $T$ and $ \|F_N\| _{\infty}.$

Fix  $ (s_1,u_1, \gamma_1), (s_2, u_2, \gamma_2) \in [-N,N] \times B_{N}[\textbf{0}]_+ \times L\cP^*[M]$, then  $ \varphi_{NM} (\cdot,s_i,  u_i, \gamma_i) \in C([0,T];\mathbb{R}\times BL^*_+)\mbox{   for  i =1,2 }. $

If $ w_i(\cdot) =\varphi_{NM}(\cdot;s_i,u_i, \gamma_i)$ for i = 1,2, then

\begin{equation*} \begin{split}
 w_i(t) = [s_i,u_i] + \int_{0}^{t} F_N[ \varphi[ w_i(s), \gamma_i]ds \mbox { for i =1,2 .}
 \end{split} \end{equation*}

Hence
\begin{equation*} \begin{split}
 \|w_1(t) -w_2(t) \|_{ \R \times {BL}^*} &  \le  | s_1- s_2| +  \|u_1 -u_2 \|_{BL}^* + \int_{0}^{t}\| F_N[  w_1(s), \gamma_1] - F_N[ w_2(s), \gamma_2] \|_{\R\times BL}^*ds \\
 & \le  | s_1- s_2| + \|u_1-u_2\|_{BL}^* + \|F_N[\cdot, \cdot]  \|_{Lip} \int_{0}^{t} \Bigl (\|w_1(s) -w_2(s)\|_{\R \times BL}^* + \| \gamma_1 - \gamma_2\|_{\infty}^* \Bigr) ds
 \end{split} \end{equation*}

and if  $ \lambda > 0 $
 \begin{equation*} \begin{split}
 e^{-\lambda t}\|w_1(t) -w_2(t) \|_{\R \times BL}^* &  \le  e^{-\lambda t}\|u_1-u_2\|_{BL}^* + \|F_N[\cdot, \cdot] \|_{Lip} \int_{0}^{t} e^{-\lambda( t-s)} e^{-\lambda s}\|w_1(s) -w_2(s)\|_{\R \times BL}^* ds \\
 & \qquad + e^{-\lambda t}|s_1-s_2| + \|F_N[\cdot, \cdot] \|_{Lip}Te^{-\lambda t} \| \gamma_1 - \gamma_2\|^*_{\infty}.
 \end{split} \end{equation*}

Hence,
 \begin{equation*} \begin{split}
  \|w_1 -w_2 \|_{\lambda} &  \le  \|u_1-u_2\|_{BL}^* + \|F_N[\cdot, \cdot]  \|_{Lip} \sup_{t \in [0,T]} \Bigl( \int_{0}^{t} e^{-\lambda( t-s)} ds\Bigr)  \|w_1 -w_2\|_{\lambda}  \\
 & \qquad +  | s_1- s_2| +  \|F_N[\cdot, \cdot] \|_{Lip}T \| \gamma_1 - \gamma_2\|^*_{\infty}
 \end{split} \end{equation*} and
 \begin{equation*} \begin{split}
 \|w_1 -w_2 \|_{\lambda} &  \le   \frac{ \|F_N[\cdot, \cdot] \|_{Lip} }{ \lambda} \|w_1 -w_2 \|_{\lambda}  + |s_1 - s_2 | +  \|u_1-u_2\|_{BL}^* +\|F_N[\cdot, \cdot] \|_{Lip}  T \| \gamma_1 - \gamma_2\|^*_{\infty}.
 \end{split} \end{equation*}

 If  $\lambda$  is such that  $ \frac{ \|F_N[\cdot, \cdot] \|_{Lip} }{ \lambda} < 1$ then we have

 \begin{equation*} \begin{split}
 \|w_1 -w_2 \|_{\lambda} &  \le   \frac{1}{ (1- \frac{ \|F_N[\cdot, \cdot] \|_{Lip} }{ \lambda})}  (| s_1 - s_2 | + \|u_1-u_2\|_{BL}^* + \|F_N[\cdot, \cdot] \|_{Lip} T \| \gamma_1 - \gamma_2\|^*_{\infty}).
 \end{split} \end{equation*}

 Hence,
 \begin{equation*} \begin{split} \|\varphi(t,s_1,u_1, \gamma_) -\varphi(t,s_2, u_2, \gamma_2)\|_{BL}^* \le  \frac{e^{\lambda T}}{ (1- \frac{ \|F_N[\cdot, \cdot] \|_{Lip} }{ \lambda})}  ( |s_1 -s_2 | + \|u_1-u_2\|_{BL}^* + \|F_N[\cdot, \cdot] \|_{Lip} T \| \gamma_1 - \gamma_2\|^*_{\infty}).
 \end{split} \end{equation*}

 Since $ \varphi_{NM} $ is Lipschitz separately in both $ t $ and  $(s, u, \gamma) $, it is Lipschitz.\\

\end{proof}

\subsubsection{Proof of Theorem \ref{main}}
\begin{proof} \begin{enumerate} \item If $  T, M >0 $, $ N \in \mathbb{N}$  by Proposition \ref{local}  there exists continuous

 $$\varphi _{NM} : [0,T] \times B_N[\textbf{0}] \times BL\cP^*[M] \rightarrow BL^* .$$

 Since $$ \R_+ \times BL^* \times BL(Q;\mathcal{P}^*) =\bigcup_{ N \in \mathbb{N}} [0,N] \times B_N[0] \times BL\cP^*[N] $$ if we define

 \begin{equation} \label{eq:union} \varphi = \cup \varphi_{NN}. \end{equation} then we have our continuous

 $$ \varphi : \R_+ \times  {BL^*}  \times BL(Q;\mathcal{P}^*) \rightarrow BL^* .$$ Furthermore, if  $ X = \mathbb{R}\times BL^* \times L(Q;\mathcal{P}^*)$ and  $ D_X[(s_1,u_1, \gamma_1), ( s_2,u_2, \gamma_2)] = | s_1 + s_2| + \|u_1 - u_2 \|_{BL}^* + \| \gamma_1 - \gamma_2 \|_{\infty}^* ,$
 then  $ [X, D_X]$ is a metric space. Define  $$\Phi: \R_+ \times X \rightarrow X $$  by $$ \Phi(t;(s,u, \gamma)) =[ \varphi(t,s,u, \gamma), \gamma] .$$

    \item This also follows from Proposition \ref{local}. Indeed, for fixed $ s,u, \gamma $ there exists $ \hat{N}$ such that  $ (s,u,  \gamma) \in[-N,N] \times B_{\hat{N}}[0] \times L\cP^*[\hat{N}].$ Since differentiability is a local condition we only need to verify \eqref{xx}  on a finite time interval $[0,N],$   $N \ge \hat N .$ This verification is easily done if we can verify that $\varphi$ is bounded on any such time interval.

                     Indeed suppose that  $ \varphi$ is bounded  on any such time interval. Let
                     $$  N(t) = \| \varphi(t)\|_{\R BL}^* $$  Then if  $ M > \max \{\sup_{ t \in [0,N]} N(t), N \} $ then on $ [0,N]\times [-N,N] \times B_N[0] \times L\cP^*[N] $
                      $$\varphi \equiv \varphi_{NN} \equiv \varphi_{MM}. $$

                      Hence  \begin{equation} \begin{split}
                      \varphi'(t,s,u, \gamma)  & = \varphi'_{MM}(t,s,u, \gamma) = F_{M}( \varphi_{MM}(t,s,u,\gamma), \gamma)   = F_{M} ( \varphi_{NN}(t,s,u,\gamma), \gamma ) \\
                        & = F ( \varphi_{NN}(t,s,u,\gamma), \gamma )= F ( \varphi(t,s,u,\gamma), \gamma).
                      \end{split} \end{equation}

                     Also obviously  $\varphi(0,s,u, \gamma) = [s,u] .$
                     Moreover, $\varphi $ is obviously bounded on any finite interval since it is actually continuous on any finite interval.

                     The argument for the following is found in the section leading up to \eqref{CONSTRAINT}.

 \begin {equation}\label{eq:convenient} \left\{\begin{array}{ll}
 \displaystyle {\Phi'}(t;x) & =  \cF [ \Phi(t;x)]    \\
\Phi(0;x)=x.
\end{array}\right.\end{equation}
So we see that  $ \Phi$  satisfies the constraint equations \eqref{CONSTRAINT}.

\item  \begin{equation*} \begin{split}
 \Phi(\R_+\times X_+)  & =  \Phi(\R_+\times \R_+ \times BL^*_{+} \times L(Q;\cP^*))  = \bigcup_{N} \Phi( [0,N]\times[0,N] \times B_{N}[\textbf{0}]_+ \times L\cP^*[N]) \\
& = \bigcup_{N}  \varphi\Bigl([0,N]\times [0,N] \times B_{N+}[\textbf{0}] \times L\cP^*[N]\Bigr)\times L\cP^*[N] \subset \bigcup_{N}\R_+ \times BL^*_{+}\times L\cP^*[N]\\
&  = X_+
\end{split} \end{equation*}
 \item This is an immediate corollary of Proposition \ref{local} given the definition of $ D_X$ and the fact that $ \varphi$ is locally Lipschitz by Propositon \ref{local}.

 \end{enumerate}

Finally we show that $ \Phi$  is actually a semiflow on $X.$
For the first condition notice that for each $ \gamma \in L(Q;\mathcal{P}^*) $,  $\varphi(\cdot, \cdot,\cdot,\cdot, \gamma)$  is a semiflow \cite[ Chpt.1, pg.19]{Thi03}.

Suppose $ x =(s,u, \gamma) \in X $, then
  \begin{equation} \begin{split}
\Phi(t+r, x) &  =[ \varphi(t+r,s,u,\gamma), \gamma] = [\varphi(t, \varphi(r,s,u,\gamma), \gamma), \gamma]= \Phi(t, (\varphi(r,s,u,\gamma), \gamma)) \\
 & = \Phi(t, \Phi(r,x))\end{split} \end{equation}
The second condition is shown to be satisfied by \eqref{eq:convenient}  above.
\end{proof}

\section{Unification}

Here we demonstrate the unifying power of this method. In \cite{CLEVACK} it is demonstrated how to obtain the  discrete, absolutely continuous, selection mutation and pure selection from a measure theoretic model by a proper choice of initial condition and mutational kernel. Here we demonstrate how to obtain a measure theoretic model and hence we obtain all of the above.

\emph{Measure Valued Constraint Equation:}

Clearly
\begin{equation*} \left\{   \begin{split}
S'(t) &=    \Lambda -DS -  B(S(t),q)\bullet \mu(t)[1]\\
\mu'(t) & =B(S(t),q)\gamma(q)\bullet\mu(t)- D(S(t),q)\bullet \mu(t)(dq)\\
[S(0),\mu(0)] & =[ s_0,\mu_0]
\end{split} \right.
\end{equation*}
becomes

\begin{equation}\begin{split}
S'(t) &=    \Lambda -DS - \int_Q B(S(t),q)\mu(t)(dq)\\
\mu'(t)(E) & =\int_Q B(S(t),q)\gamma(q)(E)\mu(t)(dq)-\int_E D(S(t),q)\mu(t)(dq).
\end{split}
\end{equation}

which is the measure valued constraint equation \cite{CLEVACK, JC3}.

 We mention one more important observation. In \cite{CLEVACK, JC1} we notice that the parameter space is  $ C(Q, \mathcal{P}_w)$, but now the parameter space is  $L\cP^*$. In order to model both pure selection and selection mutation in a continuous manner we need for the kernel  $ q \mapsto \delta_q$ to be in $L\cP^*[M]$ for some $M$. This is indeed the case as  \cite[Lemma 3.5]{HILLE},  demonstrates.

\section{ Uniform Eventual Boundedness}
A system $ \frac{dx}{dt} =F(x)$ is called dissipative and its solution uniformly eventually bounded, if all solutions exist for all forward times and if there exists some $c>0$ such that $$ \limsup_{t \rightarrow \infty}||x(t)||< c $$ for all solutions $x$.

\begin{theorem} Under (A1), (A2) the solutions to \eqref{xx} are uniformly eventually bounded on $ X_+ = \R_+\times \R_+ \times BL^*_+ \times L\cP^*.$
 \end{theorem}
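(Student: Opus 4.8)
The plan is to reduce the statement to a scalar a priori bound on the quantity $W(t) = S(t) + \|\mu(t)\|_{BL}^*$ and then close a linear differential inequality. First I record the ingredients supplied by Theorem~\ref{main}: for initial data in $X_+$ the solution $t\mapsto(\varphi(t),\gamma) = (S(t),\mu(t),\gamma)$ exists for all $t\ge 0$ (global semiflow) and remains in $X_+$ by forward invariance, so $S(t)\ge 0$ and $\mu(t)\in BL^*_+$ throughout. Since the parameter $\gamma$ is constant along orbits and $\|\gamma\|_\infty^* = \sup_q\|\gamma(q)\|_{BL}^* = 1$ for $\gamma\in L\cP^*$, the $\gamma$-component contributes a fixed summand to the metric size of the state; hence it suffices to bound $S(t) + \|\mu(t)\|_{BL}^*$. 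I would also note the elementary fact that for $\mu\in BL^*_+$ one has $\|\mu\|_{BL}^* = \mu[1]$: if $\|g\|_{BL}\le 1$ then $|g|\le\|g\|_\infty\le 1$ pointwise, so $1\pm g\ge 0$ and positivity gives $|\mu[g]|\le\mu[1]$, while $\|1\|_{BL}=1$ gives the reverse inequality. Thus writing $P(t) := \mu(t)[1] = \|\mu(t)\|_{BL}^*$, I only need $\limsup_{t\to\infty}(S(t)+P(t)) < c$ with $c$ independent of the initial data.

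The heart of the argument is a mass-conservation identity for the mutation term. Because the constraint equation \eqref{mconstraint} holds for every $g\in BL(Q)$, in particular for $g=1$, the total population $P(t)$ is differentiable and I can test the $\mu$-equation of \eqref{xxx} against $1$. Using the definition of $\bullet$, the identity \eqref{identity} in the form $(f\bullet\mu)[g]=\mu[fg]$, and the fact that each $\gamma(q)\in\cP^*$ is positive of norm $1$ so that $\gamma(q)[1]=1$, I obtain
\[
P'(t) = \big(B(S(t),\cdot)\gamma(\cdot)\bullet\mu(t)\big)[1] - \big(D(S(t),\cdot)\bullet\mu(t)\big)[1] = \mu(t)\big[B(S(t),\cdot)\big] - \mu(t)\big[D(S(t),\cdot)\big].
\]
The point is that mutation merely redistributes offspring among strategies ($\gamma(q)[1]=1$) and hence does not alter the aggregate birth rate $\mu(t)[B(S(t),\cdot)]$.

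Now I add the two equations. The $S$-equation gives $S'(t) = \Lambda - DS(t) - \mu(t)[B(S(t),\cdot)]$, so setting $W = S+P$ the birth terms cancel and $W'(t) = \Lambda - DS(t) - \mu(t)[D(S(t),\cdot)]$. Invoking (A2), namely $D(S,q)\ge\varpi>0$ for all $S,q$, together with $\mu(t)\in BL^*_+$, yields $\mu(t)[D(S(t),\cdot)]\ge\varpi\,\mu(t)[1]=\varpi P(t)$; with $\underline d := \min\{D,\varpi\}>0$ this gives the linear differential inequality $W'(t)\le\Lambda-\underline d\,W(t)$. A standard comparison/Gr\"onwall estimate then produces $W(t)\le W(0)e^{-\underline d t} + \frac{\Lambda}{\underline d}(1-e^{-\underline d t})$ and hence $\limsup_{t\to\infty}W(t)\le\Lambda/\underline d$, a bound independent of the initial state. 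Adding the fixed contribution $1$ from $\|\gamma\|_\infty^*$, any $c>\Lambda/\underline d + 1$ works uniformly, which is the claim.

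The main obstacle --- really the one substantive step --- is the cancellation identity in the second paragraph: one must verify carefully that testing the kernel term against $1$ returns exactly the aggregate birth rate, which rests on the normalization $\gamma(q)[1]=1$ coming from $\gamma(q)\in\cP^*$ and on the algebra of $\bullet$ established in Section~\ref{PM}. Once this conservation of mass is in hand, dissipativity is driven entirely by the strictly positive death floor $\varpi$ from (A2); I would flag that without this inherent-mortality hypothesis the comparison argument degenerates and no uniform bound need exist.
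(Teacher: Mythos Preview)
Your proof is correct and follows essentially the same approach as the paper: define the total mass $W(t)=S(t)+\mu(t)[1]$, use $\gamma(q)[1]=1$ so that the birth terms cancel upon summing the two equations, and close the linear differential inequality $W'\le\Lambda-\underline d\,W$ via the mortality floor from (A2). Your write-up is actually more careful than the paper's --- you explicitly justify $\|\mu\|_{BL}^*=\mu[1]$ for positive $\mu$, spell out the cancellation identity for the mutation term, and use the slightly sharper constant $\underline d=\min\{D,\varpi\}$ in place of the paper's $\min\{D,1,\varpi\}$.
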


\begin{proof} By Theorem \ref{main} $ X_+$ is forward invariant. Hence
if $M(t) = \varphi_S(t) + \varphi_{\mu} (t)[1]$ and  $ \varphi = ( \varphi_S, \varphi_ \mu)$ is the global solution to \eqref{xx}, then
 $ M(t)= \|\varphi(t) \|_{\R \times BL^*},$
 $ M'(t)= \varphi_S'(t) + \varphi_{\mu}'(t) $ and $$ M' = \Lambda -DS - D(\cdot,\varphi_S)\bullet\mu . $$ Hence $$ M'(t) \le \Lambda -dM(t)= -d(M- \frac{\Lambda}{d}) $$ where $ d =\min \{ D, 1, \varpi\}. $ Hence
$M(t) \le \max \{ M(0), \frac{\Lambda}{d}\}$  and  $ \limsup_{t \rightarrow \infty} M(t) \le \frac{\Lambda}{d}.$
\end{proof}

\section{ Conclusion} In this paper we formed a heterogeneous parameter $ \R \times BL^* $ valued basic consumer resource model. One can think of a chemostat, epidemics or any indirect competition of consumers for a resource. The model has as base the ones described in  \cite{SmithThieme, JC1}. However, here we have constructed a $ \R \times BL^* $ valued model, with nonlinear mutation term and substrate dependence in the washout rates. We have showed that this model is well posed, positive invariant and point dissipative.

In this theory we model an evolutionary game as a semiflow on the metric space  $ X = \R \times BL^* \times L(Q;\cal{P}^*) $ of which $ X_+ = BL^*_+ \times L(Q;\cal{P}^*) $ is forward invariant. This model includes all of the well posedness results found in \cite{CLEVACK}.\begin{footnote} {See the list in the second to last paragraph in section 1 above.} \end{footnote} We note that on any forward invariant subspace we have a well-posed model. This includes both  $ \R_+ \times \mathcal{M}_+ \times L\cP^* \mbox { and }  \R_+ \times \overline{\mathcal{M}}_+ \times L\cP^*.$ We conclude that by considering the Lipschitz maps on a compact metric space and forming their dual a nice unifying theory of evolutionary games can be constructed. This elegant theory involves constructing an action $ \bullet$ that allows us to multiply a linear functional by a family of linear functionals. It is difficult to multiply two linear functionals, but it is easy to multiply a linear functional by a family of linear functionals. Moreover this multiplication behaves nicely with respect to norms, i.e. the normed product is less than or equal to the product of the norms. One should notice the length and number of estimates in this paper as compared to those in \cite{CLEVACK,JC3,JC1}.

As far as future development of the theory there are two main paths to be considered. They are asymptotic analysis and parameter estimation. This paper laid the groundwork of the well posedness of this model. This is to be followed by the determination of the asymptotic limit for a pure selection kernel and a solution to the inverse problem.    \cite{AU} reveals how parameter estimation can be performed on structured population models formed on metric spaces metrized with the weak star topology.
So I hope to use the formalism found in \cite{BK} and the techniques found in \cite{AU} to develop a parameter estimation theory for these  $\R \times BL^*$ valued models. Formerly the formalism found in \cite{BK} was untenable due to the fact that the model was formed using the total variation norm, which was different from the norm of continuity of the parameter (mutation kernel). However, now this is no longer an obstacle.

\section*{References}
\bibliography{mylib}

\begin{thebibliography}{10}
\expandafter\ifx\csname url\endcsname\relax
  \def\url#1{\texttt{#1}}\fi
\expandafter\ifx\csname urlprefix\endcsname\relax\def\urlprefix{URL }\fi
\expandafter\ifx\csname href\endcsname\relax
  \def\href#1#2{#2} \def\path#1{#1}\fi

\bibitem{CLEVACK}
J.~Cleveland, A.~Ackleh, Evolutionary game theory on measure spaces:
  Well-posedness, Nonlinear Anal. Real World Appl. 14 (2013) 785--797.

\bibitem{JC2}
J.~Cleveland, Evolutionary dynamics and lipschtiz maps, submitted for
  publication, J. Differential Equations.

\bibitem{SmithThieme}
H.~L. Smith, H.~R. Thieme, Chemostats and epidemics: Competition for
  nutrients/hosts, Math. Biosci. Eng. 10~(5-6) (2013) 1635.

\bibitem{JC1}
J.~Cleveland, Measure valued consumer resource model, submitted J. Math. Anal.
  Appl.

\bibitem{AMFH}
A.~Ackleh, D.~Marshall, H.~Heatherly, B.~Fitzpatrick, Survival of the fittest
  in a generalized logistic model, Math. Models Methods Appl. Sci. 9 (1999)
  1379--1391.

\bibitem{AFT}
A.~Ackleh, B.~Fitzpatrick, H.~Thieme, Rate distributions and survival of the
  fittest: A formulation on the space of measures, Discrete Contin. Dyn. Syst.
  Ser. B 5 (2005) 917--928.

\bibitem{calsina}
A.~Calsina, S.~Cuadrado, Small mutation rate and evolutionarily stable
  strategies in infinite dimensional adaptive dynamics, J. Math. Biol. 48
  (2004) 135--159.

\bibitem{CALCAD}
A.~Calsina, S.~Cuadrado, Asymptotic stability of equilibria of selection
  mutation equations, J. Math. Biol. 54 (2007) 489--511.

\bibitem{GVA}
S.~Genieys, V.~Volpert, P.~Auger, Pattern and waves for a model in population
  dynamics with nonlocal consumption of resources, Math. Model. Nat. Phenom. 1
  (2006) 65–82.

\bibitem{P}
B.~Perthame, Transport Equation in Biology, Birkh¨auser, 2005.

\bibitem{GR1}
G.~Raoul, Local stability of evolutionary attractors for continuous structured
  populations, Monatsh. Math. 165 (2012) 117--144.

\bibitem{GR2}
G.~Raoul, Long time evolution of populations under selection and vanishing
  mutations, Acta Appl. Math. 114 (2011) 1--14.

\bibitem{CLEVACKTHI}
A.~Ackleh, J.~Cleveland, H.~Thieme, Selection mutation equations on measure
  spaces, submitted, Journal of Differential Equations.

\bibitem{Thi03}
H.~Thieme, Mathematics in Population Biology, Princeton University Press,
  Princeton, 2003.

\bibitem{JPA1}
J.~P. Aubin, Mutational equations in metric spaces, Set-Valued Analysis 1
  (1993) 3–46.

\bibitem{JPA2}
J.~Aubin, Mutational and Morphological Analysis, Birkhauser Boston, 1999.

\bibitem{Lorenz}
T.~Lorenz, Mutation Analysis: A Joint Framework for Cauchy Problems in and
  Beyond Vector Spaces, Springer, 2010.

\bibitem{Tabor02}
J.~Tabor, Differential equations in metric spaces, Math. Bohem. 127 (2002) 353.

\bibitem{CC}
C.~Calcaterra, Arc fields: Vector fields on a metric space, PhD Dissertation.
  http://www.math.metrostate.edu/craig/UHThesis1.pdf.

\bibitem{CB}
C.~Calcaterra, D.~Bleecker, Generating flows on metric spaces, J. Math. Anal.
  Appl. 248(2) (2000) 645–677.

\bibitem{JC3}
J.~Cleveland, Basic stage structure measure valued evolutionary game model,
  Math. Biosci. Eng.

\bibitem{HILLE}
S.~Hille, D.~Worm, Embedding of semigroups of lipschitz maps into positive
  linear semigroups on ordered banach spaces generated by measures, Integr.
  Equ. Oper. Theory 63 (2009) 351--371.

\bibitem{AU}
A.~Ulikowska, Structured population models in metric spaces,
  http://www.biostruct.uni-hd.de/Agnieszka-Ulikowska.php, PhD Dissertation.

\bibitem{BK}
H.~Banks, K.~Kunisch, \em Estimation Techniques for Distributed Parameter
  Sysetms Systems \& Control: Foundations \& Applications, Birkhäuser Boston 1
  ed, 1989.

\end{thebibliography}

\end{document}